\pdfoutput=1
\documentclass[10pt,conference]{IEEEtran}
\usepackage{pgfplots}
% for type setting urls
\usepackage[hyphens]{url} % This package has to be loaded *before* hyperref
\usepackage[english]{babel}
\usepackage{fixltx2e}

%%% Because Germans have umlauts and Slavs have even stranger ways of mangling letters
\usepackage[utf8]{inputenc}
\usepackage[T1]{fontenc}

%%% Multi-Columns for long lists of names
\usepackage{multicol}

%%% Set the fonts
\usepackage{mathpazo}
\usepackage[scaled=0.95]{helvet}
\usepackage{courier}
\linespread{1.05} % Palatino looks better with this

\usepackage{graphicx}
\usepackage{comment}

\usepackage{fancyhdr} % To set headers and footers

\usepackage{ifthen}
\usepackage{amssymb,amsmath,amsthm,stmaryrd,mathrsfs,wasysym}
\usepackage{enumitem,mathtools,xspace,xcolor}
\definecolor{darkgreen}{rgb}{0,0.45,0}
\usepackage{aliascnt}
%\usepackage[all,2cell]{xy}
%\UseAllTwocells
% \usepackage{natbib}
\usepackage[style=numeric,maxnames=10,backend=bibtex,%
            hyperref=true]{biblatex}
\usepackage{braket} % used for \setof{ ... } macro
\usepackage{tikz-cd}
\usepackage{tikz}
\usetikzlibrary{decorations.pathmorphing}
\usepackage[inference]{semantic}
\usepackage{booktabs}

%%%%%%%%%%%%%%%%%%%%%%%%%%%%%%%%%%%%%%%%%%%%%%%%%%%%%%%%%%%%%%%%%%%%%%%%%%%%%%%%
%% To include references in TOC we should use this package rather than a hack.
%\usepackage{tocbibind}
%\usepackage{etoolbox}           % get \apptocmd
%\apptocmd{\thebibliography}{\addcontentsline{toc}{section}{References}}{}{} % tell bibliography to get itself into the table of contents

%% hyperref should be loaded last, except for cleveref
\usepackage[colorlinks,citecolor=darkgreen,linkcolor=darkgreen,unicode]{hyperref}

\usepackage[capitalize]{cleveref}

\begin{comment}
%%%% Header and footers
\pagestyle{fancyplain}
\setlength{\headheight}{15pt}

\end{comment}

% TOC depth
\setcounter{tocdepth}{2}

\lhead[\fancyplain{}{{\thepage}}]%
      {\fancyplain{}{\nouppercase{\rightmark}}}
\rhead[\fancyplain{}{\nouppercase{\leftmark}}]%
      {\fancyplain{}{\thepage}}
\cfoot{\textsc{\footnotesize [Draft of \today]}}
\lfoot[]{}
\rfoot[]{}

%%%%%%%%%%%%%%%%%%%%%%%%%%%%%%%%%%%%%%%%%%%%%%%%%%%%%%%%%%%%%%%%%%%%%%%%%%%%%%%%
%%%% We mostly use the macros of the book, to keep notations
%%%% and conventions the same. Recall that when the macros file
%%%% is updated, we need to comment the lines containing the
%%%% string `[chapter]` since our article is not a book.
%%%%
%%%% Instructions for updating the macros.tex file:
%%%% - fetch the latest macros.tex file from the HoTT/book git repository.
%%%% - comment all lines containing "[chapter]" because this is not a book.
%%%% - comment the definition of pbcorner because the xypic package is not used.
%%%%
%%%% MACROS FOR NOTATION %%%%
% Use these for any notation where there are multiple options.

%%% Notes and exercise sections
\makeatletter
\newcommand{\sectionNotes}{\phantomsection\section*{Notes}\addcontentsline{toc}{section}{Notes}\markright{\textsc{\@chapapp{} \thechapter{} Notes}}}
\newcommand{\sectionExercises}[1]{\phantomsection\section*{Exercises}\addcontentsline{toc}{section}{Exercises}\markright{\textsc{\@chapapp{} \thechapter{} Exercises}}}
\makeatother

%%% Definitional equality (used infix) %%%
\newcommand{\jdeq}{\equiv}      % An equality judgment

\newcommand{\defeq}{\vcentcolon\equiv}  % A judgmental equality currently being defined

%%% Term being defined
\newcommand{\define}[1]{\textbf{#1}}

%%% Vec (for example)

%%% Dependent products %%%

%% Call the macro like \prd{x,y:A}{p:x=y} with any number of
%% arguments.  Make sure that whatever comes *after* the call doesn't
%% begin with an open-brace, or it will be parsed as another argument.
\makeatletter
% Currently the macro is configured to produce
%     {\textstyle\prod}(x:A) \; {\textstyle\prod}(y:B),\ 
% in display-math mode, and
%     \prod_{(x:A)} \prod_{y:B}
% in text-math mode.
\def\prd#1{\@ifnextchar\bgroup{\prd@parens{#1}}{\@ifnextchar\sm{\prd@parens{#1}\@eatsm}{\prd@noparens{#1}}}}
\def\prd@parens#1{\@ifnextchar\bgroup%
  {\mathchoice{\@dprd{#1}}{\@tprd{#1}}{\@tprd{#1}}{\@tprd{#1}}\prd@parens}%
  {\@ifnextchar\sm%
    {\mathchoice{\@dprd{#1}}{\@tprd{#1}}{\@tprd{#1}}{\@tprd{#1}}\@eatsm}%
    {\mathchoice{\@dprd{#1}}{\@tprd{#1}}{\@tprd{#1}}{\@tprd{#1}}}}}
\def\@eatsm\sm{\sm@parens}
\def\prd@noparens#1{\mathchoice{\@dprd@noparens{#1}}{\@tprd{#1}}{\@tprd{#1}}{\@tprd{#1}}}
% Helper macros for three styles
\def\lprd#1{\@ifnextchar\bgroup{\@lprd{#1}\lprd}{\@@lprd{#1}}}
\def\@lprd#1{\mathchoice{{\textstyle\prod}}{\prod}{\prod}{\prod}({\textstyle #1})\;}
\def\@@lprd#1{\mathchoice{{\textstyle\prod}}{\prod}{\prod}{\prod}({\textstyle #1}),\ }
\def\tprd#1{\@tprd{#1}\@ifnextchar\bgroup{\tprd}{}}
\def\@tprd#1{\mathchoice{{\textstyle\prod_{(#1)}}}{\prod_{(#1)}}{\prod_{(#1)}}{\prod_{(#1)}}}
\def\dprd#1{\@dprd{#1}\@ifnextchar\bgroup{\dprd}{}}
\def\@dprd#1{\prod_{(#1)}\,}
\def\@dprd@noparens#1{\prod_{#1}\,}

%%% Lambda abstractions.
% Each variable being abstracted over is a separate argument.  If
% there is more than one such argument, they *must* be enclosed in
% braces.  Arguments can be untyped, as in \lam{x}{y}, or typed with a
% colon, as in \lam{x:A}{y:B}. In the latter case, the colons are
% automatically noticed and (with current implementation) the space
% around the colon is reduced.  You can even give more than one variable
% the same type, as in \lam{x,y:A}.
\def\lam#1{{\lambda}\@lamarg#1:\@endlamarg\@ifnextchar\bgroup{.\,\lam}{.\,}}
\def\@lamarg#1:#2\@endlamarg{\if\relax\detokenize{#2}\relax #1\else\@lamvar{\@lameatcolon#2},#1\@endlamvar\fi}
\def\@lamvar#1,#2\@endlamvar{(#2\,{:}\,#1)}
\def\@lameatcolon#1:{#1}

% This version silently eats any typing annotation.
\def\lamu#1{{\lambda}\@lamuarg#1:\@endlamuarg\@ifnextchar\bgroup{.\,\lamu}{.\,}}
\def\@lamuarg#1:#2\@endlamuarg{#1}

%%% Dependent products written with \forall, in the same style
\def\fall#1{\forall (#1)\@ifnextchar\bgroup{.\,\fall}{.\,}}

%%% Existential quantifier %%%
\def\exis#1{\exists (#1)\@ifnextchar\bgroup{.\,\exis}{.\,}}

%%% Dependent sums %%%

% Use in the same way as \prd
\def\sm#1{\@ifnextchar\bgroup{\sm@parens{#1}}{\@ifnextchar\prd{\sm@parens{#1}\@eatprd}{\sm@noparens{#1}}}}
\def\sm@parens#1{\@ifnextchar\bgroup%
  {\mathchoice{\@dsm{#1}}{\@tsm{#1}}{\@tsm{#1}}{\@tsm{#1}}\sm@parens}%
  {\@ifnextchar\prd%
    {\mathchoice{\@dsm{#1}}{\@tsm{#1}}{\@tsm{#1}}{\@tsm{#1}}\@eatprd}%
    {\mathchoice{\@dsm{#1}}{\@tsm{#1}}{\@tsm{#1}}{\@tsm{#1}}}}}
\def\@eatprd\prd{\prd@parens}
\def\sm@noparens#1{\mathchoice{\@dsm@noparens{#1}}{\@tsm{#1}}{\@tsm{#1}}{\@tsm{#1}}}
\def\lsm#1{\@ifnextchar\bgroup{\@lsm{#1}\lsm}{\@@lsm{#1}}}
\def\@lsm#1{\mathchoice{{\textstyle\sum}}{\sum}{\sum}{\sum}({\textstyle #1})\;}
\def\@@lsm#1{\mathchoice{{\textstyle\sum}}{\sum}{\sum}{\sum}({\textstyle #1}),\ }
\def\tsm#1{\@tsm{#1}\@ifnextchar\bgroup{\tsm}{}}
\def\@tsm#1{\mathchoice{{\textstyle\sum_{(#1)}}}{\sum_{(#1)}}{\sum_{(#1)}}{\sum_{(#1)}}}
\def\dsm#1{\@dsm{#1}\@ifnextchar\bgroup{\dsm}{}}
\def\@dsm#1{\sum_{(#1)}\,}
\def\@dsm@noparens#1{\sum_{#1}\,}

%%% W-types

\def\wtype#1{\@ifnextchar\bgroup%
  {\mathchoice{\@twtype{#1}}{\@twtype{#1}}{\@twtype{#1}}{\@twtype{#1}}\wtype}%
  {\mathchoice{\@twtype{#1}}{\@twtype{#1}}{\@twtype{#1}}{\@twtype{#1}}}}
\def\lwtype#1{\@ifnextchar\bgroup{\@lwtype{#1}\lwtype}{\@@lwtype{#1}}}
\def\@lwtype#1{\mathchoice{{\textstyle\mathsf{W}}}{\mathsf{W}}{\mathsf{W}}{\mathsf{W}}({\textstyle #1})\;}
\def\@@lwtype#1{\mathchoice{{\textstyle\mathsf{W}}}{\mathsf{W}}{\mathsf{W}}{\mathsf{W}}({\textstyle #1}),\ }
\def\twtype#1{\@twtype{#1}\@ifnextchar\bgroup{\twtype}{}}
\def\@twtype#1{\mathchoice{{\textstyle\mathsf{W}_{(#1)}}}{\mathsf{W}_{(#1)}}{\mathsf{W}_{(#1)}}{\mathsf{W}_{(#1)}}}
\def\dwtype#1{\@dwtype{#1}\@ifnextchar\bgroup{\dwtype}{}}
\def\@dwtype#1{\mathsf{W}_{(#1)}\,}

\def\wtypeh#1{\@ifnextchar\bgroup%
  {\mathchoice{\@lwtypeh{#1}}{\@twtypeh{#1}}{\@twtypeh{#1}}{\@twtypeh{#1}}\wtypeh}%
  {\mathchoice{\@@lwtypeh{#1}}{\@twtypeh{#1}}{\@twtypeh{#1}}{\@twtypeh{#1}}}}
\def\lwtypeh#1{\@ifnextchar\bgroup{\@lwtypeh{#1}\lwtypeh}{\@@lwtypeh{#1}}}
\def\@lwtypeh#1{\mathchoice{{\textstyle\mathsf{W}^h}}{\mathsf{W}^h}{\mathsf{W}^h}{\mathsf{W}^h}({\textstyle #1})\;}
\def\@@lwtypeh#1{\mathchoice{{\textstyle\mathsf{W}^h}}{\mathsf{W}^h}{\mathsf{W}^h}{\mathsf{W}^h}({\textstyle #1}),\ }
\def\twtypeh#1{\@twtypeh{#1}\@ifnextchar\bgroup{\twtypeh}{}}
\def\@twtypeh#1{\mathchoice{{\textstyle\mathsf{W}^h_{(#1)}}}{\mathsf{W}^h_{(#1)}}{\mathsf{W}^h_{(#1)}}{\mathsf{W}^h_{(#1)}}}
\def\dwtypeh#1{\@dwtypeh{#1}\@ifnextchar\bgroup{\dwtypeh}{}}
\def\@dwtypeh#1{\mathsf{W}^h_{(#1)}\,}

\makeatother

% Other notations related to dependent sums
    % from package 'braket', write \setof{ x:A | P(x) }.

\newcommand{\proj}[1]{\ensuremath{\mathsf{pr}_{#1}}\xspace}

 % not needed in symbol index
 % not needed in symbol index, uniqueness principle for unit type

%%% recursor and induction

 % (Martin-Lof) path induction principle for identity types
 % (Paulin-Mohring) based path induction principle for identity types 

%%% the uniqueness principle for product types, formerly called surjective pairing and named \spr:

% Paths in pairs

% \newcommand{\projpath}[1]{\proj{#1}^{\mathord{=}}}

%%% For quotients %%%
%\newcommand{\pairr}[1]{{\langle #1\rangle}}
\newcommand{\pairr}[1]{{\mathopen{}(#1)\mathclose{}}}

% \newcommand{\type}{\ensuremath{\mathsf{Type}}} % this command is overridden below, so it's commented out
 % the image

%%% 2D path operations

%%% modalities %%%

% \newcommand{\ism}[1]{\ensuremath{\mathsf{is}_{#1}}}
% \newcommand{\ismodal}{\ism{\modal}}
% \newcommand{\existsmodal}{\ensuremath{{\exists}_{\modal}}}
% \newcommand{\existsmodalunique}{\ensuremath{{\exists!}_{\modal}}}
% \newcommand{\modalfunc}{\textsf{\modal-fun}}
% \newcommand{\Ecirc}{\ensuremath{\mathsf{E}_\modal}}
% \newcommand{\Mcirc}{\ensuremath{\mathsf{M}_\modal}}

%\newcommand{\mbind}[1]{\ensuremath{\hat{#1}}}

%\newcommand{\mmap}[1]{\ensuremath{\bar{#1}}}
%\newcommand{\mjoin}{\ensuremath{\mreturn^{-1}}}
% Subuniverse

%%% Localizations
% \newcommand{\islocal}[1]{\ensuremath{\mathsf{islocal}_{#1}}\xspace}
% \newcommand{\loc}[1]{\ensuremath{\mathcal{L}_{#1}}\xspace}

%%% Identity types %%%
\newcommand{\idsym}{{=}}
\newcommand{\id}[3][]{\ensuremath{#2 =_{#1} #3}\xspace}

% A propositional equality currently being defined

%%% Dependent paths

%%% singleton
% \newcommand{\sgl}{\ensuremath{\mathsf{sgl}}\xspace}
% \newcommand{\sctr}{\ensuremath{\mathsf{sctr}}\xspace}

%%% Reflexivity terms %%%
% \newcommand{\reflsym}{{\mathsf{refl}}}
\newcommand{\refl}[1]{\ensuremath{\mathsf{refl}_{#1}}\xspace}

%%% Path concatenation (used infix, in diagrammatic order) %%%
\newcommand{\ct}{%
  \mathchoice{\mathbin{\raisebox{0.5ex}{$\displaystyle\centerdot$}}}%
             {\mathbin{\raisebox{0.5ex}{$\centerdot$}}}%
             {\mathbin{\raisebox{0.25ex}{$\scriptstyle\,\centerdot\,$}}}%
             {\mathbin{\raisebox{0.1ex}{$\scriptscriptstyle\,\centerdot\,$}}}
}

%%% Path reversal %%%

%%% Transport (covariant) %%%
\newcommand{\trans}[2]{\ensuremath{{#1}_{*}\mathopen{}\left({#2}\right)\mathclose{}}\xspace}

%\newcommand{\Trans}[2]{\ensuremath{{#1}_{*}\left({#2}\right)}\xspace}
 % Without argument
%\newcommand{\transport}[2]{\ensuremath{\mathsf{transport}_{*} \: {#2}\xspace}}

%%% 2D transport

%%% Constant transport

%%% Map on paths %%%
\newcommand{\mapfunc}[1]{\ensuremath{\mathsf{ap}_{#1}}\xspace} % Without argument

\newcommand{\mapdepfunc}[1]{\ensuremath{\mathsf{apd}_{#1}}\xspace} % Without argument
% \newcommand{\mapdep}[2]{\ensuremath{{#1}\llparenthesis{#2}\rrparenthesis}\xspace}

%%% 2D map on paths

%%% Identity functions %%%
\newcommand{\idfunc}[1][]{\ensuremath{\mathsf{id}_{#1}}\xspace}

%%% Homotopies (written infix) %%%
\newcommand{\htpy}{\sim}

%%% Other meanings of \sim
       % bisimulation
         % an equivalence relation

%%% Equivalence types %%%
\newcommand{\eqv}[2]{\ensuremath{#1 \simeq #2}\xspace}

\newcommand{\eqvsym}{\simeq}    % infix symbol

\newcommand{\hfib}[2]{{\mathsf{fib}}_{#1}(#2)}

%%% Map on total spaces %%%

%%% Universe types %%%
%\newcommand{\type}{\ensuremath{\mathsf{Type}}\xspace}
\newcommand{\UU}{\ensuremath{\mathcal{U}}\xspace}

\let\type\UU
% Universes of truncated types

%Pointed types

%%% Ordinals and cardinals

%%% Univalence
 % the inverse of idtoeqv

 % the full axiom

%%% Truncation levels

 % The path to the center of contraction

% h-propositions
% \newcommand{\anhprop}{a mere proposition\xspace}
% \newcommand{\hprops}{mere propositions\xspace}

%%% Homotopy fibers %%%
%\newcommand{\hfiber}[2]{\ensuremath{\mathsf{hFiber}(#1,#2)}\xspace}

%%% Bracket/squash/truncation types %%%
% \newcommand{\brck}[1]{\textsf{mere}(#1)}
% \newcommand{\Brck}[1]{\textsf{mere}\Big(#1\Big)}
% \newcommand{\trunc}[2]{\tau_{#1}(#2)}
% \newcommand{\Trunc}[2]{\tau_{#1}\Big(#2\Big)}
% \newcommand{\truncf}[1]{\tau_{#1}}
%\newcommand{\trunc}[2]{\Vert #2\Vert_{#1}}

%\newcommand{\brck}[1]{\trunc{-1}{#1}}
%\newcommand{\Brck}[1]{\Trunc{-1}{#1}}
%\newcommand{\bproj}[1]{\tproj{-1}{#1}}
%\newcommand{\bprojf}{\tprojf{-1}}

% Big parentheses

% Projection and extension for truncations

%
%%% The empty type
\newcommand{\emptyt}{\ensuremath{\mathbf{0}}\xspace}

%%% The unit type
\newcommand{\unit}{\ensuremath{\mathbf{1}}\xspace}
\newcommand{\ttt}{\ensuremath{\star}\xspace}

%%% The two-element type

%%% Injections into binary sums and pushouts
\newcommand{\inlsym}{{\mathsf{inl}}}
\newcommand{\inrsym}{{\mathsf{inr}}}
\newcommand{\inl}{\ensuremath\inlsym\xspace}
\newcommand{\inr}{\ensuremath\inrsym\xspace}

%%% The segment of the interval

%%% Free groups

 % the "other" free group

%%% Glue of a pushout
\newcommand{\glue}{\mathsf{glue}}

%%% Circles and spheres
\newcommand{\Sn}{\mathbb{S}}
\newcommand{\base}{\ensuremath{\mathsf{base}}\xspace}

%%% Suspension
\newcommand{\susp}{\Sigma}
\newcommand{\north}{\mathsf{N}}
\newcommand{\south}{\mathsf{S}}

%%% Blanks (shorthand for lambda abstractions)
\newcommand{\blank}{\mathord{\hspace{1pt}\text{--}\hspace{1pt}}}

%%% Nameless objects
\newcommand{\nameless}{\mathord{\hspace{1pt}\underline{\hspace{1ex}}\hspace{1pt}}}

%%% Some decorations
%\newcommand{\bbU}{\ensuremath{\mathbb{U}}\xspace}
% \newcommand{\bbB}{\ensuremath{\mathbb{B}}\xspace}

%%% Some categories

% Pullback corner
%\newbox\pbbox
%\setbox\pbbox=\hbox{\xy \POS(65,0)\ar@{-} (0,0) \ar@{-} (65,65)\endxy}
%\def\pb{\save[]+<3.5mm,-3.5mm>*{\copy\pbbox} \restore}

% Macros for the categories chapter

     % Symbol for equivalence of categories

%%% Natural numbers
\newcommand{\N}{\ensuremath{\mathbb{N}}\xspace}
%\newcommand{\N}{\textbf{N}}

 % alternative nat in induction chapter

   % alternative zero in induction chapter

 % alternative suc in induction chapter

 % alternative double in induction chapter

%%% Lists

%%% Vectors of given length, used in induction chapter

%%% Integers
\newcommand{\Z}{\ensuremath{\mathbb{Z}}\xspace}

\makeatletter
\def\defthm#1#2#3{%
  %% Ensure all theorem types are numbered with the same counter
  \newaliascnt{#1}{thm}
  \newtheorem{#1}[#1]{#2}
  \aliascntresetthe{#1}
  %% This command tells cleveref's \cref what to call things
  \crefname{#1}{#2}{#3}}

% Now define a bunch of theorem-type environments.
\newtheorem{thm}{Theorem}[section]
\crefname{thm}{Theorem}{Theorems}
%Temporary TeX Live 2016 fix (hyperref breaks stuff)
%see http://tex.stackexchange.com/questions/315462/texlive-2016-hyperref-cleverref-incompatibility
\let\xx@thm\@thm
\AtBeginDocument{\let\@thm\xx@thm}
%\defthm{prop}{Proposition}   % Probably we shouldn't use "Proposition" in this way
\defthm{cor}{Corollary}{Corollaries}
\defthm{lem}{Lemma}{Lemmas}
\defthm{axiom}{Axiom}{Axioms}
% Since definitions and theorems in type theory are synonymous, should
% we actually use the same theoremstyle for them?
\theoremstyle{definition}
\defthm{defn}{Definition}{Definitions}
\theoremstyle{remark}
\defthm{rmk}{Remark}{Remarks}
\defthm{eg}{Example}{Examples}
\defthm{egs}{Examples}{Examples}
\defthm{notes}{Notes}{Notes}
% Number exercises within chapters, with their own counter.
%\newtheorem{ex}{Exercise}[chapter]
%\crefname{ex}{Exercise}{Exercises}

% Display format for sections
\crefformat{section}{\S#2#1#3}
\Crefformat{section}{Section~#2#1#3}
\crefrangeformat{section}{\S\S#3#1#4--#5#2#6}
\Crefrangeformat{section}{Sections~#3#1#4--#5#2#6}
\crefmultiformat{section}{\S\S#2#1#3}{ and~#2#1#3}{, #2#1#3}{ and~#2#1#3}
\Crefmultiformat{section}{Sections~#2#1#3}{ and~#2#1#3}{, #2#1#3}{ and~#2#1#3}
\crefrangemultiformat{section}{\S\S#3#1#4--#5#2#6}{ and~#3#1#4--#5#2#6}{, #3#1#4--#5#2#6}{ and~#3#1#4--#5#2#6}
\Crefrangemultiformat{section}{Sections~#3#1#4--#5#2#6}{ and~#3#1#4--#5#2#6}{, #3#1#4--#5#2#6}{ and~#3#1#4--#5#2#6}

% Display format for appendices
\crefformat{appendix}{Appendix~#2#1#3}
\Crefformat{appendix}{Appendix~#2#1#3}
\crefrangeformat{appendix}{Appendices~#3#1#4--#5#2#6}
\Crefrangeformat{appendix}{Appendices~#3#1#4--#5#2#6}
\crefmultiformat{appendix}{Appendices~#2#1#3}{ and~#2#1#3}{, #2#1#3}{ and~#2#1#3}
\Crefmultiformat{appendix}{Appendices~#2#1#3}{ and~#2#1#3}{, #2#1#3}{ and~#2#1#3}
\crefrangemultiformat{appendix}{Appendices~#3#1#4--#5#2#6}{ and~#3#1#4--#5#2#6}{, #3#1#4--#5#2#6}{ and~#3#1#4--#5#2#6}
\Crefrangemultiformat{appendix}{Appendices~#3#1#4--#5#2#6}{ and~#3#1#4--#5#2#6}{, #3#1#4--#5#2#6}{ and~#3#1#4--#5#2#6}

\crefname{part}{Part}{Parts}

\crefformat{paragraph}{\S#2#1#3}
\Crefformat{paragraph}{Paragraph~#2#1#3}
\crefrangeformat{paragraph}{\S\S#3#1#4--#5#2#6}
\Crefrangeformat{paragraph}{Paragraphs~#3#1#4--#5#2#6}
\crefmultiformat{paragraph}{\S\S#2#1#3}{ and~#2#1#3}{, #2#1#3}{ and~#2#1#3}
\Crefmultiformat{paragraph}{Paragraphs~#2#1#3}{ and~#2#1#3}{, #2#1#3}{ and~#2#1#3}
\crefrangemultiformat{paragraph}{\S\S#3#1#4--#5#2#6}{ and~#3#1#4--#5#2#6}{, #3#1#4--#5#2#6}{ and~#3#1#4--#5#2#6}
\Crefrangemultiformat{paragraph}{Paragraphs~#3#1#4--#5#2#6}{ and~#3#1#4--#5#2#6}{, #3#1#4--#5#2#6}{ and~#3#1#4--#5#2#6}

% Number subsubsections
\setcounter{secnumdepth}{5}

% Display format for figures
\crefname{figure}{Figure}{Figures}

% Use cleveref instead of hyperref's \autoref
\let\autoref\cref

%%%% EQUATION NUMBERING %%%%

% The following hack uses the single theorem counter to number
% equations as well, so that we don't have both Theorem 1.1 and
% equation (1.1).
\let\c@equation\c@thm
\numberwithin{equation}{section}

%%%% ENUMERATE NUMBERING %%%%

% Number the first level of enumerates as (i), (ii), ...

%%%% MARGINS %%%%

% This is a matter of personal preference, but I think the left
% margins on enumerates and itemizes are too wide.
\setitemize[1]{leftmargin=2em}
\setenumerate[1]{leftmargin=*}

% Likewise that they are too spaced out.
\setitemize[1]{itemsep=-0.2em}
\setenumerate[1]{itemsep=-0.2em}

%%% Notes %%%
\def\noteson{%
\gdef\note##1{\mbox{}\marginpar{\color{blue}\textasteriskcentered\ ##1}}}

\noteson

%%%% CITATIONS %%%%

% \let \cite \citep

%%%% INDEX %%%%

 % If you index something that is in a footnote
  % Style for pageref to a definition

   % Index a definition
 % Index a term in a footnote
    % Index "see also"

%%%% Standard phrasing or spelling of common phrases %%%%

%%%% MISC %%%%

 % Use for "mental" pause, instead of \smallskip or \medskip

%% Use \symlabel instead of \label to mark a pageref that you need in the index of symbols
\newcounter{symindex}

% Local Variables:
% mode: latex
% TeX-master: "hott-online"
% End:

%%%%%%%%%%%%%%%%%%%%%%%%%%%%%%%%%%%%%%%%%%%%%%%%%%%%%%%%%%%%%%%%%%%%%%%%%%%%%%%%
%%%% Our commands which are not part of the macros.tex file.
%%%% We should keep these commands separate, because we will
%%%% update the macros.tex following the updates of the book.

%%%% First we redefine the \id, \eqv and \ct commands so that they accept an
%%%% arbitrary number of arguments. This is useful when writing longer strings
%%%% of equalities or equivalences.

\makeatletter

\renewcommand{\id}[3][]{
  \@ifnextchar\bgroup
    {#2 \mathbin{\idsym_{#1}} \id[#1]{#3}}
    {#2 \mathbin{\idsym_{#1}} #3}
  }

\renewcommand{\eqv}[2]{
  \@ifnextchar\bgroup
    {#1 \eqvsym \eqv{#2}}
    {#1 \eqvsym #2}
  }

\newcommand{\ctsym}{%
  \mathchoice{\mathbin{\raisebox{0.5ex}{$\displaystyle\centerdot$}}}%
             {\mathbin{\raisebox{0.5ex}{$\centerdot$}}}%
             {\mathbin{\raisebox{0.25ex}{$\scriptstyle\,\centerdot\,$}}}%
             {\mathbin{\raisebox{0.1ex}{$\scriptscriptstyle\,\centerdot\,$}}}
  }

\renewcommand{\ct}[3][]{
  \@ifnextchar\bgroup
    {#2 \mathbin{\ctsym_{#1}} \ct[#1]{#3}}
    {#2 \mathbin{\ctsym_{#1}} #3}
  }

\makeatother

%%%% We always use textstyle products and sums...
%\renewcommand{\prd}{\tprd}
%\renewcommand{\sm}{\tsm}
\makeatletter
\renewcommand{\@dprd}{\@tprd}
\renewcommand{\@dsm}{\@tsm}
\renewcommand{\@dprd@noparens}{\@tprd}
\renewcommand{\@dsm@noparens}{\@tsm}

%%%% ...with a bit more spacing
\renewcommand{\@tprd}[1]{\mathchoice{{\textstyle\prod_{(#1)}\,}}{\prod_{(#1)}\,}{\prod_{(#1)}\,}{\prod_{(#1)}\,}}
\renewcommand{\@tsm}[1]{\mathchoice{{\textstyle\sum_{(#1)}\,}}{\sum_{(#1)}\,}{\sum_{(#1)}\,}{\sum_{(#1)}\,}}

%%%%%%%%%%%%%%%%%%%%%%%%%%%%%%%%%%%%%%%%%%%%%%%%%%%%%%%%%%%%%%%%%%%%%%%%%%%%%%%%
%%%% We adjust the \prd command so that implicit arguments become possible.
%%%%
%%%% First, we have the following switch. Set it to true if implicit arguments
%%%% are desired, or to false if not. Note turning off implicit arguments
%%%% might render some parts of the text harder to comprehend, since in the
%%%% text might appear $f(x)$ where we would have $f(i,x)$ without implicit
%%%% arguments.

\newcommand{\implicitargumentson}{\boolean{true}}

%%%% If one wants to use implicit arguments in the notation for product types,
%%%% a * has to be put before the argument that has to be implicit.
%%%% For example: in $\prd{x:A}*{y:B}{u:P(y)}Q(x,y,u)$, the argument y is
%%%% implicit. Any of the arguments can be made implicit this way.

%%%% First of all, we should make the command \prd search not only for a
%%%% brace, but also for a star. We introduce an auxiliary command that
%%%% determines whether the next character is a star or brace.
\newcommand{\@ifnextchar@starorbrace}[2]
%  {\@ifnextcharamong{#1}{#2}{*}{\bgroup};}
  {\@ifnextchar*{#1}{\@ifnextchar\bgroup{#1}{#2}}}
  
%%%% When encountering the \prd command, latex should determine whether it
%%%% should print implicit argument brackets or not. So the first branching
%%%% happens right here.
\renewcommand{\prd}{\@ifnextchar*{\@iprd}{\@prd}}

\newcommand{\@prd}[1]
  {\@ifnextchar@starorbrace
    {\prd@parens{#1}}
    {\@ifnextchar\sm{\prd@parens{#1}\@eatsm}{\prd@noparens{#1}}}}
\newcommand{\@prd@parens}{\@ifnextchar*{\@iprd}{\prd@parens}}
\renewcommand{\prd@parens}[1]
  {\@ifnextchar@starorbrace
    {\@theprd{#1}\@prd@parens}
    {\@ifnextchar\sm{\@theprd{#1}\@eatsm}{\@theprd{#1}}}}
\newcommand{\@theprd}[1]
  {\mathchoice{\@dprd{#1}}{\@tprd{#1}}{\@tprd{#1}}{\@tprd{#1}}}
\renewcommand{\dprd}[1]{\@dprd{#1}\@ifnextchar@starorbrace{\dprd}{}}
\renewcommand{\tprd}[1]{\@tprd{#1}\@ifnextchar@starorbrace{\tprd}{}}

%%%% Here we tell the actual symbols to be printed.
\newcommand{\@theiprd}[1]{\mathchoice{\@diprd{#1}}{\@tiprd{#1}}{\@tiprd{#1}}{\@tiprd{#1}}}
\newcommand{\@iprd}[2]{\@ifnextchar@starorbrace%
  {\@theiprd{#2}\@prd@parens}%
  {\@ifnextchar\sm%
    {\@theiprd{#2}\@eatsm}%
    {\@theiprd{#2}}}}
\def\@tiprd#1{
  \ifthenelse{\implicitargumentson}
    {\@@tiprd{#1}\@ifnextchar\bgroup{\@tiprd}{}}
    {\@tprd{#1}}}
\def\@@tiprd#1{\mathchoice{{\textstyle\prod_{\{#1\}}\,}}{\prod_{\{#1\}}\,}{\prod_{\{#1\}}\,}{\prod_{\{#1\}}\,}}
\def\@diprd{
  \ifthenelse{\implicitargumentson}
    {\@tiprd}
    {\@tprd}}

%%%% And finally we need to redefine \@eatprd so that implicit arguments also
%%%% works in the scope of a dependent sum.    
\def\@eatprd\prd{\@prd@parens}

\makeatother

%%%%%%%%%%%%%%%%%%%%%%%%%%%%%%%%%%%%%%%%%%%%%%%%%%%%%%%%%%%%%%%%%%%%%%%%%%%%%%%%
%%%% Redefining the quantifiers, so that some of the longer 
%%%% formulas appear one a single line without problems

%%% Dependent products written with \forall, in the same style
\makeatletter
\def\tfall#1{\forall_{(#1)}\@ifnextchar\bgroup{\,\tfall}{\,}}
\renewcommand{\fall}{\tfall}

%%% Existential quantifier %%%
\def\texis#1{\exists_{(#1)}\@ifnextchar\bgroup{\,\texis}{\,}}
\renewcommand{\exis}{\texis}

%%% Unique existence %%%
\def\uexis#1{\exists!_{(#1)}\@ifnextchar\bgroup{\,\uexis}{\,}}
\makeatother
%%%%%%%%%%%%%%%%%%%%%%%%%%%%%%%%%%%%%%%%%%%%%%%%%%%%%%%%%%%%%%%%%%%%%%%%%%%%%%%%

%%%% Introducing logical usage of fonts.
 % use 'mf' in command to indicate model font
\newcommand{\typefont}{\mathsf} % use 'tf' in command to indicate type font
\newcommand{\catfont}{\mathrm} % use 'cf' in command to indicate cat font

%%%%%%%%%%%%%%%%%%%%%%%%%%%%%%%%%%%%%%%%%%%%%%%%%%%%%%%%%%%%%%%%%%%%%%%%%%%%%%%%
%%%% Some macros of the book are redefined.

\renewcommand{\UU}{\typefont{U}}

\renewcommand{\pairr}[1]{{\mathopen{}\langle #1\rangle\mathclose{}}}
\renewcommand{\type}{\typefont{Type}}

\renewcommand{\susp}{\typefont{\Sigma}}

%%%%%%%%%%%%%%%%%%%%%%%%%%%%%%%%%%%%%%%%%%%%%%%%%%%%%%%%%%%%%%%%%%%%%%%%%%%%%%%%
%%%% The following is a big unorganized list of new macros that we use in the
%%%% notes. 

\newcommand{\tfcolim}{\typefont{colim}}

\newcommand{\sbrck}[1]{\Vert #1\Vert}

 %% to be used in conjunction with -pretopos.

%%%%%%%%%%%%%%%%%%%%%%%%%%%%%%%%%%%%%%%%%%%%%%%%%%%%%%%%%%%%%%%%%%%%%%%%%%%%%%%%
%%%% JUDGMENTS
%%%%
%%%% Below we define several commands for the judgments of type theory. There
%%%% are commands
%%%% * \jctx for the judgment that something is a context.
%%%% * \jctxeq for the judgment that two contexts are the same
%%%% * \jtype for the judgment that something is a type in a context
%%%% * \jtypeeq for the judgment that two types in the same context are the same
%%%% * \jterm for the judgment that something is a term of a type in a context
%%%% * \jtermeq for the judgment that two terms of the same type are the same

\makeatletter
\newcommand{\jctx}{\@ifnextchar*{\@jctxAlignTrue}{\@jctxAlignFalse}}
\newcommand{\@jctxAlignTrue}[2]{& \vdash #2~ctx}
\newcommand{\@jctxAlignFalse}[1]{\vdash #1~ctx}

\newcommand{\jtype}{\@ifnextchar*{\@jtypeAlignTrue}{\@jtypeAlignFalse}}
\newcommand{\@jtypeAlignFalse}[2]{#1\vdash #2~type}
\newcommand{\@jtypeAlignTrue}[3]{#2 & \vdash #3~type}

\newcommand{\jtermc}{\@ifnextchar*{\@jtermcAlignTrue}{\@jtermcAlignFalse}}
\newcommand{\@jtermcAlignTrue}[3]{ & \vdash #3:#2}
\newcommand{\@jtermcAlignFalse}[2]{\vdash #2:#1}

\newcommand{\jtermt}{\@ifnextchar*{\@jtermtAlignTrue}{\@jtermtAlignFalse}}
\newcommand{\@jtermtAlignTrue}[4]{#2 & \vdash #4:#3}
\newcommand{\@jtermtAlignFalse}[3]{#1 \vdash #3:#2}

\newcommand{\jctxeq}{\@ifnextchar*{\@jctxeqAlignTrue}{\@jctxeqAlignFalse}}
\newcommand{\@jctxeqAlignTrue}[3]{& \vdash #2\jdeq #3~ctx}
\newcommand{\@jctxeqAlignFalse}[2]{\vdash #1\jdeq #2~ctx}

\newcommand{\jtypeeq}{\@ifnextchar*{\@jtypeeqAlignTrue}{\@jtypeeqAlignFalse}}
\newcommand{\@jtypeeqAlignTrue}[4]{#2 & \vdash #3\jdeq #4~type}
\newcommand{\@jtypeeqAlignFalse}[3]{#1\vdash #2\jdeq #3~type}

\newcommand{\jtermceq}{\@ifnextchar*{\@jtermceqAlignTrue}{\@jtermceqAlignFalse}}
\newcommand{\@jtermceqAlignTrue}[4]{& \vdash #3\jdeq #4:#2}
\newcommand{\@jtermceqAlignFalse}[3]{\vdash #2\jdeq #3:#1}

\newcommand{\jtermteq}{\@ifnextchar*{\@jtermteqAlignTrue}{\@jtermteqAlignFalse}}
\newcommand{\@jtermteqAlignTrue}[5]{#2 & \vdash #4\jdeq #5:#3}
\newcommand{\@jtermteqAlignFalse}[4]{#1\vdash #3\jdeq #4:#2}
\makeatother

%%%%%%%%%%%%%%%%%%%%%%%%%%%%%%%%%%%%%%%%%%%%%%%%%%%%%%%%%%%%%%%%%%%%%%%%%%%%%%%%
%%%% Often we shall need to display lists of inference rules. This environment
%%%% adjusts the array environment so that there is enough vertical space
%%%% between two inference rules
%%%%
%%%% bug: there's two much space above the array.

%%%%%%%%%%%%%%%%%%%%%%%%%%%%%%%%%%%%%%%%%%%%%%%%%%%%%%%%%%%%%%%%%%%%%%%%%%%%%%%%
%%%% CONTEXT EXTENSION 
%%%%
%%%% explicit context extension notation which we will use only rarely

%%%% The context extension command.
%%%%
%%%% To get a feeling of how the command works, here are a few examples.
%%%% \ctxext{A}{B} will print A.B
%%%% \ctxext{{A}{B}}{C} will print (A.B).C
%%%% \ctxext{{{A}{B}}{C}}{{D}{E}} will print ((A.B).C).(D.E)

\makeatletter
\newcommand{\ctxext}[2]{\@ctxext@ctx #1.\@ctxext@type #2}
\newcommand{\@ctxext}{\@ifnextchar\bgroup{\@@ctxext}{}}
\newcommand{\@ctxext@ctx}{\@ifnextchar\ctxext{\@ctxext@nested}{\@ifnextchar\ctxwk{\@ctxwk@nested}{\@ctxext}}}
\newcommand{\@ctxext@type}{\@ifnextchar\ctxext{\@ctxext@nested}{\@ifnextchar\subst{\@subst@nested}{\@ctxext}}}
\newcommand{\@@ctxext}[1]{\@ifnextchar\bgroup{\@ctxext@parens{#1}}{#1}}
\newcommand{\@ctxext@parens}[2]{(\ctxext{#1}{#2})}
\newcommand{\@ctxext@nested}[3]{\@ctxext@parens{#2}{#3}}
\makeatother

%%%%%%%%%%%%%%%%%%%%%%%%%%%%%%%%%%%%%%%%%%%%%%%%%%%%%%%%%%%%%%%%%%%%%%%%%%%%%%%%
%%%% SUBSTITUTION

%%%% The substitution command will act the following way
%%%%
%%%% \subst{x}{P} will print P[x]
%%%% \subst{x}{{f}{Q}} will print Q[f][x]
%%%% \subst{{x}{f}}{{x}{Q}} will print Q[x][f[x]]

\makeatletter
\newcommand{\subst}[2]{\@subst@type #2[\@subst@term #1]}
\newcommand{\@subst}{\@ifnextchar\bgroup{\@@subst}{}}
\newcommand{\@@subst}[1]{\@ifnextchar\bgroup{\subst{#1}}{#1}}
\newcommand{\@subst@term}{\@subst}
\newcommand{\@subst@type}{\@ifnextchar\ctxext{\@ctxext@nested}{\@ifnextchar\ctxwk{\@ctxwk@nested}{\@subst}}}
\newcommand{\@subst@nested}[3]{\@subst@parens{#2}{#3}}
\newcommand{\@subst@parens}[2]{(\subst{#1}{#2})}
\makeatother

%%%%%%%%%%%%%%%%%%%%%%%%%%%%%%%%%%%%%%%%%%%%%%%%%%%%%%%%%%%%%%%%%%%%%%%%%%%%%%%%
%%%% WEAKENING

%%%% The weakening command is very much like the substitution command.

\makeatletter
\newcommand{\ctxwk}[2]{\langle\@ctxwk@act #1\rangle\@ctxwk@pass #2}
\newcommand{\@ctxwk}{\@ifnextchar\bgroup{\@@ctxwk}{}}
\newcommand{\@@ctxwk}[1]{\@ifnextchar\bgroup{\ctxwk{#1}}{#1}}
\newcommand{\@ctxwk@act}{\@ctxwk}
\newcommand{\@ctxwk@pass}{\@ifnextchar\ctxext{\@ctxext@nested}{\@ifnextchar\subst{\@subst@nested}{\@ctxwk}}}
\newcommand{\@ctxwk@parens}[2]{(\ctxwk{#1}{#2})}
\newcommand{\@ctxwk@nested}[3]{\@ctxwk@parens{#2}{#3}}
\makeatother

%%%%%%%%%%%%%%%%%%%%%%%%%%%%%%%%%%%%%%%%%%%%%%%%%%%%%%%%%%%%%%%%%%%%%%%%%%%%%%%%
%%%% When investigation pointed structures we use the \pt macro.

\makeatletter
\newcommand{\pt}[1][]{*_{
  \@ifnextchar\undergraph{\@undergraph@nested}
    {\@ifnextchar\underovergraph{\@underovergraph@nested}{}}#1}}
\makeatother

%%%%%%%%%%%%%%%%%%%%%%%%%%%%%%%%%%%%%%%%%%%%%%%%%%%%%%%%%%%%%%%%%%%%%%%%%%%%%%%%
%%%% OPERATIONS ON GRAPHS
%%%%
%%%% First of all, each graph has a type of vertices and a type of edges. The
%%%% type of vertices of a graph $\Gamma$ is denoted by $\pts{\Gamma}$;
%%%% and likewise for the type of edges.

\makeatletter
\newcommand{\pts}[1]{{\@graphop@nested{#1}}_{0}}
\newcommand{\edg}[1]{{\@graphop@nested{#1}}_{1}}
\newcommand{\@graphop@nested}[1]
  {\@ifnextchar\ctxext{\@ctxext@nested}
      {\@ifnextchar\undergraph{\@undergraph@nested}
         {\@ifnextchar\underovergraph{\@underovergraph@nested}{}}}
    #1}
\makeatother

%%%% The following operations of \undergraph and \underovergraph are used to
%%%% define the free category and the free groupoid of a graph, respectively

\makeatletter
\newcommand{\@undergraphtest}[2]{\@ifnextchar({#1}{#2}}
\newcommand{\undergraph}[2]{\@undergraphtest{\@undergraph@parens{#1}{#2}}{\@undergraph{#1}{#2}}}
\newcommand{\@undergraph}[2]{{#2/#1}}
\newcommand{\@undergraph@nested}[3]{\@undergraph@parens{#2}{#3}}
\newcommand{\@undergraph@parens}[2]{(\@undergraph{#1}{#2})}
\makeatother

\makeatletter
\newcommand{\underovergraph}[2]{\@underovergraphtest{\@underovergraph@parens{#1}{#2}}{\@underovergraph{#1}{#2}}}
\newcommand{\@underovergraph}[2]{{#2}\,{\parallel}\,{#1}}
\newcommand{\@underovergraphtest}{\@undergraphtest}
\newcommand{\@underovergraph@parens}[2]{(\@underovergraph{#1}{#2})}
\newcommand{\@underovergraph@nested}[3]{\@underovergraph@parens{#2}{#3}}
\makeatother

%%%%%%%%%%%%%%%%%%%%%%%%%%%%%%%%%%%%%%%%%%%%%%%%%%%%%%%%%%%%%%%%%%%%%%%%%%%%%%%%
%% Some tikz macros to typeset diagrams uniformly.

\tikzset{patharrow/.style={double,double equal sign distance,-,font=\scriptsize}}
\tikzset{description/.style={fill=white,inner sep=2pt}}

%% Used for extra wide diagrams, e.g. when the label is too large otherwise.
\tikzset{commutative diagrams/column sep/Huge/.initial=18ex}

%%%%%%%%%%%%%%%%%%%%%%%%%%%%%%%%%%%%%%%%%%%%%%%%%%%%%%%%%%%%%%%%%%%%%%%%%%%%%%%%
%%%% New theorem environment for conjectures.

\defthm{conj}{Conjecture}{Conjectures}

%%%%%%%%%%%%%%%%%%%%%%%%%%%%%%%%%%%%%%%%%%%%%%%%%%%%%%%%%%%%%%%%%%%%%%%%%%%%%%%%
%%%% The following environment for desiderata should not be there. It is better
%%%% to use the issue tracker for desiderata.

\usepackage{balance}

\addbibresource{ktheory-hott.bib}

%%%%%%%%%%%%%%%%%%%%%%%%%%%%%%%%%%%%%%%%%%%%%%%%%%%%%%%%%%%%%%%%%%%%%%%%%%%%%%%%
%%%% We define a command \@ifnextcharamong accepting an arbitrary number of
%%%% arguments. The first is what it should do if a match is found, the second
%%%% contains what it should do when no match is found; all the other arguments
%%%% are the things it tries to find as the next character.
%%%%
%%%% For example \@ifnextcharamong{#1}{#2}{*}{\bgroup} expands #1 if the next
%%%% character is a * or a \bgroup and it expands #2 otherwise.

\newcommand{\rprojective}[1]{\mathbb{R}\mathsf{P}^{#1}}

\newcommand{\inftyGpd}{\infty\catfont{Gpd}}
\newcommand{\Zmodtwo}{\Z/2\Z}

\makeatletter
\newcommand{\@ifnextcharamong}[2]
  {\@ifnextchar\bgroup{\@@ifnextchar{#1}{\@@ifnextcharamong{#1}{#2}}}{#2}}
\newcommand{\@@ifnextchar}[3]{\@ifnextchar{#3}{#1}{#2}}
\newcommand{\@@ifnextcharamong}[3]{\@ifnextcharamong{#1}{#2}}
\makeatother

\makeatletter

\makeatother

\newcommand{\join}[2]{{#1}*{#2}}
\newcommand{\gluesym}{{\mathsf{glue}}}
\newcommand{\jglue}{\ensuremath\gluesym\xspace}

%%%%%%%%%%%%%%%%%%%%%%%%%%%%%%%%%%%%%%%%%%%%%%%%%%%%%%%%%%%%%%%%%%%%%%%%%%%%%%%%
\title{The real projective spaces\\in homotopy type theory}
\date{\today}
\author{\IEEEauthorblockN{Ulrik Buchholtz}
  \IEEEauthorblockA{Technische Universit{\"a}t Darmstadt\\
  Email: buchholtz@mathematik.tu-darmstadt.de}
  \and
  \IEEEauthorblockN{Egbert Rijke}
  \IEEEauthorblockA{Carnegie Mellon University\\
  Email: erijke@andrew.cmu.edu}}

\pagestyle{plain}
\begin{document}

%\IEEEoverridecommandlockouts
%\IEEEpubid{\makebox[\columnwidth]{978-1-5090-3018-7/17/\$31.00~
%\copyright2017 IEEE \hfill} \hspace{\columnsep}\makebox[\columnwidth]{ }}

\maketitle
\begin{abstract}
Homotopy type theory is a version of Martin-L{\"o}f type theory taking advantage of its homotopical models. In particular, we can use and construct objects of homotopy theory and reason about them using higher inductive types. In this article, we construct the \emph{real projective spaces}, key players in homotopy theory, as certain higher inductive types in homotopy type theory. The classical definition of $\mathbb{R}\mathrm{P}^n$, as the quotient space identifying antipodal points of the $n$\nobreakdash-sphere, does not translate directly to homotopy type theory. Instead, we define $\mathbb{R}\mathrm{P}^n$ by induction on $n$ simultaneously with its tautological bundle of $2$\nobreakdash-element sets. As the base case, we take $\mathbb{R}\mathrm{P}^{-1}$ to be the empty type. In the inductive step, we take $\mathbb{R}\mathrm{P}^{n+1}$ to be the mapping cone of the projection map of the tautological bundle of $\rprojective{n}$, and we use its universal property and the univalence axiom to define the tautological bundle on $\mathbb{R}\mathrm{P}^{n+1}$. 

By showing that the total space of the tautological bundle of $\mathbb{R}\mathbb{P}^n$ is the $n$\nobreakdash-sphere $\Sn^n$, we retrieve the classical description of $\mathbb{R}\mathrm{P}^{n+1}$ as $\mathbb{R}\mathrm{P}^n$ with an $(n+1)$\nobreakdash-disk attached to it. The infinite dimensional real projective space $\mathbb{R}\mathrm{P}^\infty$, defined as the sequential colimit of $\mathbb{R}\mathrm{P}^n$ with the canonical inclusion maps, is equivalent to the \emph{Eilenberg-MacLane space} $K(\Zmodtwo,1)$, which here arises as the subtype of the universe consisting of $2$\nobreakdash-element types. Indeed, the infinite dimensional projective space classifies the $0$\nobreakdash-sphere bundles, which one can think of as synthetic line bundles.

These constructions in homotopy type theory further illustrate the utility of homotopy type theory, including the interplay of type theoretic and homotopy theoretic ideas.

\smallskip
\noindent \textbf{Keywords.} Real projective spaces, Homotopy Type Theory, Univalence axiom, Higher inductive types.
\end{abstract}

\section{Introduction}

Homotopy type theory emerged from the discovery of a homotopical interpretation of Martin-L{\"o}f's identity types \cite{AwodeyWarren2009} and in particular from the construction of the model in simplicial sets \cite{AfterVoevodsky} and the proposal of the univalence axiom \cite{Voevodsky06,Voevodsky10}. We refer to the book on homotopy type theory for details \cite{TheBook}.

Homotopy type theory allows us to reason synthetically about the objects of algebraic topology (spaces, paths, homotopies, etc.) analogously to how the setting of Euclidean geometry allows us to reason synthetically about points, lines, circles, etc.\ (as opposed to analytically in terms of elements and subsets of the Cartesian plane $\mathbb R^2$). Already in \parencite{TheBook} we find a portion of algebraic topology and homotopy theory developed in homotopy type theory (homotopy groups, including the fundamental group of the circle, the Hopf fibration, the Freudenthal suspension theorem and the van Kampen theorem, for example). Here we give an elementary construction in homotopy type theory of the real projective spaces $\rprojective{n}$ and we develop some of their basic properties.

In classical homotopy theory the real projective space $\rprojective{n}$ is either defined as the space of lines through the origin in $\mathbb R^{n+1}$ or as the quotient by the antipodal action of the $2$\nobreakdash-element group on the sphere $\Sn^n$ \cite{HatcherAT}. It has a simple cell complex description with one cell in each dimension $i \le n$. Real projective spaces are frequently used as examples or as bases for computations. Each $\rprojective{n}$ has as universal covering space the sphere $\Sn^n$, and its fundamental group is $\Zmodtwo$ for $n>1$.

In homotopy type theory, we would like to have access to these spaces and their basic properties, but the above definitions do not directly carry over to this setting. We instead give an elementary inductive construction of the spaces $\rprojective{n}$ together with their universal coverings $\mathsf{cov}^n : \rprojective{n} \to \UU$. (We write $\UU$ for the first universe of types.) Indeed, the universal coverings are fibrations where the fibers are $2$\nobreakdash-element sets, so the maps $\mathsf{cov}^n$ factor through the sub-type of $\UU$ consisting of $2$\nobreakdash-element sets, $\UU_{\Sn^0}$ (see Section~\ref{sec:UUS0} below).
To illustrate how our approach matches traditional constructions, note that we get the real projective plane by attaching a 2-cell to the circle along the map of degree of $2$.
Geometrically, this amounts to gluing a disk to M{\"o}bius strip along its boundary, cf.~Fig.~\ref{fig:mobius}.

\begin{figure}
  \centering
  \pgfplotsset{width=7cm,compat=1.14}
  \pgfplotsset{
    colormap={graywhite}{
      rgb255=(64,64,64)
      rgb255=(255,255,255)
    }
  }
  \begin{tikzpicture}
    \begin{axis}[
      hide axis,
      view = {40}{40}
    ]
    \addplot3 [
      surf,
      colormap name=graywhite,
      shader     = faceted interp,
      point meta = x,
      samples    = 40,
      samples y  = 5,
      z buffer   = sort,
      domain     = 0:360,
      y domain   =-0.5:0.5
    ] (
      {(1+0.3*y*cos(x/2)))*cos(x)},
      {(1+0.3*y*cos(x/2)))*sin(x)},
      {0.3*y*sin(x/2)}
    );
  
    \addplot3 [
      samples=50,
      domain=-145:185,
      samples y=0,
      thick
    ] (
      {cos(x)},
      {sin(x)},
      {0}
    );
    \end{axis}
  \end{tikzpicture}

  \caption{M{\"o}bius strip. The real projective plane is obtained by gluing on a disk along the boundary. (Ti{\itshape k}Z code adapted from \cite{JakeTikZ}.)}
  \label{fig:mobius}
\end{figure}
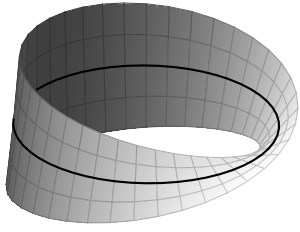

In a companion article, we shall describe another construction of both real and complex projective spaces as \emph{homotopy quotients} of ($\infty$\nobreakdash-)group actions (of the $2$\nobreakdash-element group $O(1)$ and the circle group $U(1)$, respectively).

Our construction (like other constructions in homotopy type theory) gives the \emph{homotopy types} of real projective spaces, as opposed to more refined structure such as that of smooth manifolds or real algebraic varieties.
A benefit of having a construction in homotopy type theory is that it applies in any model thereof, and not just in the standard one of infinity groupoids, $\inftyGpd$.
For example, it is conjectured (and proved in many cases) that Grothendieck $\infty$\nobreakdash-toposes provide models.
As the real projective spaces are just certain colimits built from the unit type, they sit in the discrete part of any Grothendieck $\infty$\nobreakdash-topos, i.e., in the inverse image of the geometric morphism to $\inftyGpd$.
In the settings of real- or smooth-cohesive homotopy type theory, they should be the shapes of the incarnations of the real projective spaces as topological or smooth manifolds, respectively, cf.~the similar situation for the circle discussed in~\cite{Shulman2015}.
It is also expected that homotopy type theory can be modeled in \emph{elementary} $\infty$\nobreakdash-toposes~\cite{Shulman2017}, which do not in general admit a geometric morphism to $\inftyGpd$, and hence something like our construction would be needed to access the homotopy types of the real projective spaces in such models.

In the remainder of this introduction we describe in more detail the precise setting of homotopy type theory we are working in as well as the techniques we use.

We work in an intensional dependent type theory with a univalent universe $\UU$ containing the type of natural numbers, the unit type and the empty type, and we assume that $\UU$ is closed under homotopy pushouts. Furthermore, to use the universal property of pushouts to map into arbitrary types regardless of their size, we assume function extensionality for all dependent types. (This is automatic if every type belongs to a univalent universe, but we do not assume more than one universe.) The types $A:\UU$ are called \emph{small types}. 

Let us briefly recall how homotopy pushouts are obtained as higher inductive types: if we are given $A,B,C:\UU$ and $f:C\to A$ and $g:C\to B$, then we form the pushout of $f$ and $g$
\begin{equation*}
\begin{tikzcd}
C \arrow[r,"g"] \arrow[d,swap,"f"] & B \arrow[d,"\inr"] \\
A \arrow[r,swap,"\inl"] & A+_C B
\end{tikzcd}
\end{equation*}
as the higher inductive type $A+_C B:\UU$ with point constructors
\begin{align*}
\inl & : A\to A+_C B \\
\inr & : B\to A+_C B
\intertext{and a path constructor}
\glue & : \prd{c:C} \inl(f(c))=\inr(g(c)).
\end{align*}
The \emph{elimination principle} of $A+_C B$ provides a way of defining sections of type families $P:A+_C B\to\type$ (not necessarily small). If we have
\begin{align*}
p_A & : \prd{a:A} P(\inl(a)) \\
p_B & : \prd{b:B} P(\inr(b)) 
\intertext{as well as paths}
p_C & : \prd{c:C} p_A(f(c)) =_{\jglue(c)}^P p_B(g(c)),
\end{align*}
then we get a section $f : \prd{x:A+_CB} P(x)$. Moreover, the section $f$ we obtain this way satisfies the computation rules $f(\inl(a))\jdeq p_A(a):P(\inl(a))$ and $f(\inr(b))\jdeq p_B(b) : P(\inr (b))$. We also have a witness for propositional equality for map on paths, $w : \prd{c:C} \mapdepfunc f(\jglue(c)) = p_C(c)$ (we refer to the homotopy type theory book for details on dependent paths $a=_p^Pb$ and the map on paths operations $\mapfunc f$ and $\mapdepfunc f$).

From pushouts, it is easy to derive also homotopy coequalizers of parallel maps $f,g:A\rightrightarrows B$ for $A,B:\UU$. From these we get sequential colimits of diagrams
\begin{equation*}
\begin{tikzcd}
A_0 \arrow[r,"f_0"] & A_1 \arrow[r,"f_1"] & A_2 \arrow[r,"f_2"] & \cdots
\end{tikzcd}
\end{equation*}
consisting of $A:\N\to\UU$ and $f:\prd{n:\N} A_n\to A_{n+1}$.

We also get the \emph{suspension} $\susp A$ of a type $A$ as the pushout
\begin{equation*}
\begin{tikzcd}
A \arrow[r] \arrow[d] & \unit \arrow[d,"\south"] \\
\unit \arrow[r,swap,"\north"] & \susp A.
\end{tikzcd}
\end{equation*} 
The spheres $\Sn^n$ are defined by recursion on $n:\N_{-1}$, by iteratively suspending
the $(-1)$\nobreakdash-sphere, which is the empty type. Thus, for each $n:\N_{-1}$, the $(n+1)$\nobreakdash-sphere
has a north pole and a south pole, and there is a homotopy $\glue:\Sn^n\to (\north=\south)$ which suspends the $n$\nobreakdash-sphere between the poles. In particular, the $0$\nobreakdash-sphere $\Sn^0$ is a $2$\nobreakdash-element type.

The pushout of the product projections $A \times B \to A$ and $A \times B \to B$ gives the \emph{join} $\join AB$. Lemma~8.5.10 in \cite{TheBook} establishes that $\susp A \simeq \join{\Sn^0}A$, so it follows that $\Sn^{n+1} \simeq \join{\Sn^0}{\Sn^n}$.

With pushouts, it is also possible to define for any type $A:\UU$ its propositional truncation $\sbrck A$, which is the reflection of $A$ into the type of mere propositions.
This means we have a function ${\lvert{{-}}\rvert} : A \to \sbrck A$, precomposition with which gives an equivalence $(\sbrck A \to B) \simeq (A \to B)$ for any mere proposition $B$.
There are several constructions of the propositional truncation in terms of pushouts and colimits due to Van Doorn~\cite{vanDoorn2016}, Kraus~\cite{Kraus2016}, and the second-named author~\cite{joinconstruction}.

As a final special case of a pushout we shall need, we have the \emph{mapping cone} $C_f$ of a map $f : A \to B$. This is just the pushout of $f$ and the unique map $A \to 1$. In case $A \equiv \Sn^n$ is a sphere, we think of $f$ as describing an $n$\nobreakdash-sphere in $B$ which is bounded by an $(n+1)$\nobreakdash-cell in $C_f$. We also say that $C_f$ is obtained by attaching an $(n+1)$\nobreakdash-cell to $B$ via the attaching map $f$. The inclusion of the unique element of $1$ can be thought of as a \emph{hub} and the glue paths parametrized by elements of $\Sn^n$ as \emph{spokes}. (In Section~6.7 of \cite{TheBook} the term `hubs-and-spokes method' is used for this way of attaching cells.)

A model for our setup is given by the cubical sets of \cite{CCHM2016} (they verify the rules for suspensions---the same argument works for pushouts), and this model can itself be interpreted in extensional Martin-L\"of type theory with one universe containing $0,1,\N$ and closed under disjoint union and dependent sums and products.%
\footnote{The model has been formalized in Nuprl by Mark Bickford: \url{http://www.nuprl.org/wip/Mathematics/cubical!type!theory/}}

The remainder of this paper is organized as follows. In Section~\ref{sec:UUS0} we study the type of $2$\nobreakdash-element sets, $\UU_{\Sn^0}$, which is a (large) model of $\rprojective\infty$. In particular, we show that $\eqv{(\Sn^0=A)}{A}$ for any $A:\UU_{\Sn^0}$, and that $\UU_{\Sn^0}$ classifies the $\Sn^0$\nobreakdash-bundles over any type. To prove these facts, we use Licata's encode-decode method, in the following form. 

\begin{lem}[Encode-decode method]\label{lem:encode-decode}
Let $A$ be a pointed type with base point $a_0$, and let $B:A\to\UU$ be a type
family with a point $b_0:B(a_0)$. Then the following are equivalent:
\begin{enumerate}[label={\normalfont(}\roman*\/{\normalfont)},itemsep=0.25em]
\item The type $\sm{x:A}B(x)$ is contractible.
\item The fiberwise map
\begin{equation*}
\mathsf{enc}_{a_0,b_0}:\prd*{x:A} (a_0=x)\to B(x)
\end{equation*}
defined by $\mathsf{enc}_{a_0,b_0}(\refl{a_0})\defeq b_0$ is an equivalence.
\end{enumerate}
\end{lem}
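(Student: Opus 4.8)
The plan is to reduce the equivalence of (i) and (ii) to a single application of the principle that a fiberwise map is a fiberwise equivalence precisely when the induced map on total spaces is an equivalence, combined with the contractibility of a based path space. The point is that $\mathsf{enc}_{a_0,b_0}$ is a family of maps indexed by $x:A$, so conditions (i) and (ii) turn out to be just two ways of reading off when the associated map of total spaces is an equivalence.

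First I would record two standard facts. The based path space $\sm{x:A}(a_0 = x)$ is contractible, with centre $(a_0,\refl{a_0})$; this is immediate from path induction (Lemma~3.11.8 of \cite{TheBook}). Moreover, for any map $\varphi$ whose domain is contractible, $\varphi$ is an equivalence if and only if its codomain is contractible: if $\varphi$ is an equivalence the codomain inherits contractibility, and conversely any map between contractible types has contractible fibres and is therefore an equivalence.

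Next, since $\mathsf{enc}_{a_0,b_0}$ is a fiberwise transformation, it induces a map on total spaces $\varphi \defeq \total{\mathsf{enc}_{a_0,b_0}}$, namely
\[
\varphi : \Parens{\sm{x:A}(a_0=x)} \to \Parens{\sm{x:A}B(x)}, \qquad \varphi(x,p) \defeq (x,\mathsf{enc}_{a_0,b_0}(p)).
\]
By the theorem that a fiberwise transformation is a fiberwise equivalence exactly when the induced map on total spaces is an equivalence (Theorem~4.7.7 of \cite{TheBook}), condition (ii) is logically equivalent to $\varphi$ being an equivalence. Combining this with the two facts above, the domain of $\varphi$ is the contractible based path space, so $\varphi$ is an equivalence exactly when its codomain $\sm{x:A}B(x)$ is contractible, i.e.\ exactly when (i) holds. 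Chaining these equivalences yields (i) $\Leftrightarrow$ (ii), and I would present the argument as this chain rather than proving the two implications separately.

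The argument is essentially bookkeeping once the right abstract lemmas are in place, so there is no serious obstacle; the only point requiring care is to confirm that the map fed into Theorem~4.7.7 is literally the total-space map of the family $\mathsf{enc}_{a_0,b_0}$. This holds by construction, because $\mathsf{enc}_{a_0,b_0}$ was defined fiberwise by path induction via $\mathsf{enc}_{a_0,b_0}(\refl{a_0})\defeq b_0$, so $\varphi$ is definitionally of the form required and no coherence adjustment is needed.
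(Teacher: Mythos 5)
Your argument is correct and is precisely the paper's proof: the paper likewise observes that $\sm{x:A}(a_0=x)$ is contractible and invokes Theorem~4.7.7 of \cite{TheBook} to identify fiberwise equivalence with equivalence of total spaces, merely compressing the bookkeeping you spell out. No differences worth noting.
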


\begin{proof}
Since the total space $\sm{x:A} a_0=x$ is contractible, this is a direct corollary of Theorem 4.7.7 of \cite{TheBook}.
\end{proof}

Then in Section~\ref{sec:fdrp} we use this to give definitions of $\rprojective n$ for finite $n$, together with its tautological $\Sn^0$\nobreakdash-bundle $\mathsf{cov}^n_{\Sn^0}:\rprojective n\to\UU_{\Sn^0}$. To recover the description of $\rprojective{n+1}$ as $\rprojective n$ with an $(n+1)$\nobreakdash-cell attached to it, we show that the total space of the tautological bundle of $\rprojective{n}$ is the $n$\nobreakdash-sphere. Here we shall need the flattening lemma for pushouts, cf.~\cite[Lemma~8.5.3]{TheBook}.

\begin{lem}[Flattening Lemma]\label{lem:flattening}
Consider a pushout square
\begin{equation*}
\begin{tikzcd}
A \arrow[r,"g"] \arrow[d,swap,"f"] & Y \arrow[d,"\inr"] \\
X \arrow[r,swap,"\inl"] & X+_A Y
\end{tikzcd}
\end{equation*}
and let $P_X:X\to\UU$ and $P_Y:Y\to \UU$ be type families that are compatible
in the sense that there is a fiberwise equivalence $e:\prd{a:A}\eqv{P_X(f(a))}{P_Y(g(a))}$. 

Let $P_{tot}:X+_A Y \to \UU$ be the unique type family, defined via the universal
property of $X+_A Y$, for which there are equivalences
\begin{align*}
\alpha_\inl : & \prd{x:X} \eqv{P_X(x)}{P_{tot}(\inl(x))} \\
\alpha_\inr : & \prd{y:Y} \eqv{P_Y(y)}{P_{tot}(\inr(y))}.
\end{align*}
Then the square
\begin{equation*}
\begin{tikzcd}[column sep=7em]
\sm{a:A}P_X(f(a)) \arrow[d,swap,"\pairr{a,p}\mapsto\pairr{f(a),p}"] \arrow[r,"\pairr{a,p}\mapsto\pairr{g(a),e(p)}"] & \sm{y:Y}P_Y(y) \arrow[d,"\pairr{y,p}\mapsto\pairr{\inr(y),\alpha_\inr(p)}"'] \\
\sm{x:X}P_X(x) \arrow[r,swap,"\pairr{x,p}\mapsto\pairr{\inl(x),\alpha_\inl(p)}"] & \sm{z:X+_A Y} P_{tot}(z)
\end{tikzcd}
\end{equation*}
of which commutativity is witnessed by
\begin{equation*}
\lam{\pairr{a,p}} \pairr{\glue(a),\blank},
\end{equation*}
is a pushout square.
\end{lem}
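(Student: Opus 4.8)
The plan is to verify the mapping-out universal property: I will show that the displayed cocone exhibits $\sm{z:X+_A Y}P_{tot}(z)$ as the pushout of the span of total spaces. Concretely, fix an arbitrary (possibly large) type $Q$ and show that the map sending a function $\sm{z:X+_A Y}P_{tot}(z)\to Q$ to its restriction along the cocone is an equivalence onto the type of cocones under the total-space span with vertex $Q$. Since we have assumed function extensionality, the elimination principle of $X+_A Y$ upgrades to the full (dependent) universal property, and this is the engine of the argument.

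First I would curry. By the universal property of $\Sigma$-types, $\left(\sm{z:X+_A Y}P_{tot}(z)\to Q\right)\simeq \prd{z:X+_A Y}\left(P_{tot}(z)\to Q\right)$. Writing $E(z)\defeq \left(P_{tot}(z)\to Q\right)$ and applying the dependent universal property of $X+_A Y$ to the family $E$, the right-hand side is equivalent to the type of triples $(k_X,k_Y,k_A)$, where $k_X:\prd{x:X}\left(P_{tot}(\inl(x))\to Q\right)$, $k_Y:\prd{y:Y}\left(P_{tot}(\inr(y))\to Q\right)$, and $k_A$ supplies, for each $a:A$, a path $\transfib{E}{\glue(a)}{k_X(f(a))}=k_Y(g(a))$.

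Next I would translate along the defining equivalences. Precomposition with $\alpha_\inl(x)$ and $\alpha_\inr(y)$ converts $k_X$ and $k_Y$ into $\tilde k_X:\prd{x:X}\left(P_X(x)\to Q\right)$ and $\tilde k_Y:\prd{y:Y}\left(P_Y(y)\to Q\right)$, which uncurry to maps $\sm{x:X}P_X(x)\to Q$ and $\sm{y:Y}P_Y(y)\to Q$. The heart of the matter is to show that, under this translation, the coherence datum $k_A$ becomes precisely the equation $\tilde k_X(f(a))(p)=\tilde k_Y(g(a))(e(a)(p))$ for all $a:A$ and $p:P_X(f(a))$, i.e.\ the compatibility condition of a cocone on the total-space span (whose left and right legs are $\pairr{a,p}\mapsto\pairr{f(a),p}$ and $\pairr{a,p}\mapsto\pairr{g(a),e(a)(p)}$). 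This rests on the computation, forced by the very definition of $P_{tot}$ through the universal property and univalence, that transport along $\glue(a)$ in $P_{tot}$ agrees with $e(a)$ modulo the $\alpha$'s; explicitly, $\transfib{P_{tot}}{\glue(a)}{\alpha_\inl(f(a))(p)}=\alpha_\inr(g(a))(e(a)(p))$.

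Composing the three equivalences above yields an equivalence between $\left(\sm{z:X+_A Y}P_{tot}(z)\to Q\right)$ and the type of cocones into $Q$, and a last bookkeeping check confirms that it is indeed restriction along the displayed cocone (with glue witness $\pairr{\glue(a),\blank}$), so that cocone is a pushout cocone. I expect the main obstacle to be the third step: unwinding how transport of a function $P_{tot}(\inl(x))\to Q$ along $\glue(a)$ in the family $E$ behaves, using the action of transport on function types together with the glue computation rule for $\mapfunc{P_{tot}}$ and univalence, so that the transported coherence lands exactly on the cocone coherence. Everything else is routine manipulation of currying, transport, and the pushout computation rules, but it is in this step that all the definitional choices in $P_{tot}$ and in the equivalences $\alpha_\inl,\alpha_\inr$ must be made to cohere.
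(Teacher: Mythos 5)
Your proposal is correct in outline, and the crux is exactly where you locate it: the computation
$\transfib{P_{tot}}{\glue(a)}{\alpha_\inl(f(a))(p)}=\alpha_\inr(g(a))(e(a)(p))$,
which comes from $\apfunc{P_{tot}}(\glue(a))$ being (conjugate by the $\alpha$'s to) $\ua(e(a))$ together with the rule that transport along $\ua(e)$ is application of $e$; feeding this into the transport of the family $E(z)\defeq(P_{tot}(z)\to Q)$ along $\glue(a)$ turns the dependent-path datum of the pushout eliminator into the cocone compatibility condition, as you say. Note, however, that the paper does not prove this lemma at all -- it imports it as Lemma~8.5.3 of the HoTT book -- so there is no in-paper proof to match. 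The book's argument (via the general flattening lemma of its \S 6.12) is phrased differently: it establishes the \emph{induction} principle of the flattened higher inductive type for the total space $\sm{z:X+_AY}P_{tot}(z)$ directly, by a dependent elimination with a transport analysis in an arbitrary family over the total space. Your route instead verifies the non-dependent mapping-out universal property against every (possibly large) type $Q$, which, given the paper's blanket assumption of function extensionality and its definition of pushouts via cocone-precomposition being an equivalence, is an equally valid and arguably cleaner characterization; the trade-off is that the book's induction-principle version is the more primitive statement (it yields the recursion principle as a special case), whereas yours leans on funext and on quantifying over large $Q$ to recover the same strength. Both are standard; just make sure, in the ``bookkeeping'' step, that the pointwise form of the transported coherence really is precomposition with the glue component $\pairr{\glue(a),\blank}$ of the stated cocone, since that identification is where the inverse transport $\transfib{P_{tot}}{\opp{\glue(a)}}{\blank}$ appears and sign errors are easy to make.
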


As a corollary, we obtain the fiber sequence
\begin{equation*}
\begin{tikzcd}
\Sn^0 \arrow[r,hook] & \Sn^n \arrow[r,->>] & \rprojective{n}.
\end{tikzcd}
\end{equation*}
By the fiber sequence notation $F\hookrightarrow E\twoheadrightarrow B$ we mean that we have a dependent type over $B$, such that the fiber over the base point of $B$ is (equivalent to) $F$, and the total space is (equivalent to) $E$.

Finally, in Section~\ref{sec:idrp} we recover $\rprojective\infty$ as the sequential colimit of the $\rprojective n$, and this is now a small type. We show that the tautological bundle of $\rprojective\infty$ is an equivalence into $\UU_{\Sn^0}$, so we see that $\UU_{\Sn^0}$ is indeed a model for $\rprojective{\infty}$. Section~\ref{sec:conclusion} concludes.

\section{The type of $2$-element sets}
\label{sec:UUS0}

\begin{defn}
For any type $X$, we define \define{the connected component of $X$ in $\UU$} to be the type \[\UU_X\defeq\sm{A:\UU}\sbrck{A=X}.\] In particular, we have the type \[\UU_{\Sn^{0}}\jdeq\sm{A:\UU}\sbrck{A={\Sn^{0}}}\] of $2$\nobreakdash-element sets.

An \define{$X$\nobreakdash-bundle} over a type $A$ is defined to be a type family $B:A\to\UU_X$. 
\end{defn}

A term of type $\UU_X$ is formally a pair of a small type $A:\UU$ together with a term of type $\sbrck{A=X}$, but since the latter is a mere proposition we usually omit it, and consider the term itself as a small type.

\begin{thm}\label{thm:ptd_2elt_sets}
The type
\begin{equation*}
\sm{A:\UU_{\Sn^{0}}}A
\end{equation*}
of pointed $2$\nobreakdash-element sets is contractible.
\end{thm}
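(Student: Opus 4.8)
The plan is to exhibit an explicit center of contraction and reduce the whole statement to a small fact about self-equivalences of $\Sn^{0}$. For the center I take the pointed $2$\nobreakdash-element set $\pairr{\Sn^{0},\north}$, where $\Sn^{0}$ is regarded as an element of $\UU_{\Sn^{0}}$ via $\bproj{\refl{\Sn^{0}}}$ and $\north:\Sn^{0}$ is its north pole. To supply the contraction it suffices to show, for every $\pairr{A,a}:\sm{A:\UU_{\Sn^{0}}}A$, that the identity type $\pairr{A,a}=\pairr{\Sn^{0},\north}$ is contractible, and then take its center; this is convenient because contractibility is a mere proposition, which will let me discard truncations at the crucial step.

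First I would identify the path type. Since the defining predicate $\sbrck{A=\Sn^{0}}$ of $\UU_{\Sn^{0}}$ is a mere proposition, the first projection $\UU_{\Sn^{0}}\to\UU$ is an embedding, so identifications in $\UU_{\Sn^{0}}$ agree with identifications of the underlying types, and by univalence with equivalences. Unfolding the $\Sigma$\nobreakdash-identity $\pairr{A,a}=\pairr{\Sn^{0},\north}$ and noting that transport in the tautological family $\lam{X}X$ along an identification is precisely application of the corresponding equivalence, I obtain
\begin{equation*}
\eqv{\bigl(\pairr{A,a}=\pairr{\Sn^{0},\north}\bigr)}{\sm{e:\eqv{A}{\Sn^{0}}} e(a)=\north}.
\end{equation*}
Thus it remains to prove that $\sm{e:\eqv{A}{\Sn^{0}}}(e(a)=\north)$ is contractible.

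Now comes the reduction. As the goal $\iscontr\bigl(\sm{e:\eqv{A}{\Sn^{0}}}(e(a)=\north)\bigr)$ is a mere proposition, I may eliminate the truncated witness $\sbrck{A=\Sn^{0}}$ that comes with $A:\UU_{\Sn^{0}}$, transport along the resulting identification, and hence assume $A\jdeq\Sn^{0}$. The task becomes showing, for each $a:\Sn^{0}$, that $\sm{e:\eqv{\Sn^{0}}{\Sn^{0}}}(e(a)=\north)$ is contractible, i.e.\ that there is a unique self-equivalence of $\Sn^{0}$ carrying $a$ to $\north$. Equivalently, writing $\mathsf{ev}_{a}:(\eqv{\Sn^{0}}{\Sn^{0}})\to\Sn^{0}$ for evaluation at $a$, the type in question is the fiber $\hfib{\mathsf{ev}_{a}}{\north}$, so it is enough to check that $\mathsf{ev}_{a}$ is an equivalence.

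I expect this last combinatorial fact to be the main obstacle: that the only self-equivalences of $\Sn^{0}$ are $\idfunc$ and the swap, and that $\mathsf{ev}_{a}$ is an equivalence $\eqv{(\eqv{\Sn^{0}}{\Sn^{0}})}{\Sn^{0}}$ for each $a$. Since $\Sn^{0}$ is a set with decidable equality, I would argue by case analysis—an equivalence is pinned down by its two distinct values, and for fixed $a$ exactly one assignment sends $a\mapsto\north$—the delicate point being to keep the coherence data of the equivalences under control rather than just their underlying functions. Everything else (the path characterization and the truncation-stripping) is routine. Once $\hfib{\mathsf{ev}_{a}}{\north}$ is contractible for all $a$, the path type is contractible for every $\pairr{A,a}$, which yields the contraction and completes the proof. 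I note in passing that, once the fiberwise equivalence $\eqv{(\Sn^{0}=A)}{A}$ is in hand, this theorem is also immediate from the encode--decode Lemma~\ref{lem:encode-decode}.
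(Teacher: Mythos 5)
Your proposal is correct and follows essentially the same route as the paper: the same center $\pairr{\Sn^{0},\north}$, the same characterization of paths between pointed types via univalence and transport, the same trick of stripping the truncation $\sbrck{A=\Sn^{0}}$ because contractibility is a mere proposition, and the same reduction to uniqueness of a self-equivalence of $\Sn^{0}$ with a prescribed value. The only cosmetic difference is that you package the last step as ``evaluation at $a$ is an equivalence,'' whereas the paper reduces to $a\jdeq\north$ via the swap and then builds the contracting homotopy explicitly using decidable equality and contractibility of fibers---which is precisely the ``delicate point'' about coherence data that you flag.
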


Since the theorem is a statement about pointed $2$\nobreakdash-element sets, 
we will invoke the following general lemma which computes equality of pointed types.

\begin{lem}\label{lem:equiv_of_ptdtype}
For any $A,B:\UU$ and any $a:A$ and $b:B$, we have an equivalence of type
\begin{equation*}
\eqv{\Big(\pairr{A,a}=\pairr{B,b}\Big)}{\Big(\sm{e:\eqv{A}{B}}e(a)=b\Big)}.
\end{equation*}
\end{lem}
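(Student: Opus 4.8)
The plan is to view a pointed type as an element of the $\Sigma$-type $\sm{A:\UU}A$, so that the left-hand side is literally the identity type $\pairr{A,a}=\pairr{B,b}$ in $\sm{A:\UU}A$, and then to peel this apart with the standard characterization of identity types of dependent pairs followed by univalence. Concretely, I would first apply \cite[Theorem~2.7.2]{TheBook} to the type family $\idfunc[\UU]$ (i.e.\ $X\mapsto X$) over $\UU$, which yields
\[
\eqv{\big(\pairr{A,a}=\pairr{B,b}\big)}{\Big(\sm{p:A=B}\big(\transfib{\idfunc[\UU]}{p}{a}=b\big)\Big)}.
\]
This already has the right shape; what remains is to replace the base type $A=B$ by $\eqv AB$ and to simplify the transported point $\transfib{\idfunc[\UU]}{p}{a}$.

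Next I would feed in univalence. On the base, $\ua:(\eqv AB)\to(A=B)$ is an equivalence (being the inverse of $\idtoeqv$), and on the fibers the computation rule for transport along a univalence path gives, for each $e:\eqv AB$, a path $\transfib{\idfunc[\UU]}{\ua(e)}{a}=e(a)$ (\cite[Section~2.10]{TheBook}); concatenation with this fixed path is then an equivalence $\big(e(a)=b\big)\simeq\big(\transfib{\idfunc[\UU]}{\ua(e)}{a}=b\big)$. Since a map of $\Sigma$-types that is an equivalence on the base and a fiberwise equivalence on the fibers is again an equivalence (combining reindexing of a total space along the base equivalence $\ua$ with \cite[Theorem~4.7.7]{TheBook}), we obtain
\[
\eqv{\Big(\sm{e:\eqv AB}\big(e(a)=b\big)\Big)}{\Big(\sm{p:A=B}\big(\transfib{\idfunc[\UU]}{p}{a}=b\big)\Big)}.
\]
Composing this with the inverse of the equivalence from \cite[Theorem~2.7.2]{TheBook} gives the claimed equivalence.

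Essentially nothing here is hard: the proof is an assembly of three standard equivalences, and the only thing needing genuine care is bookkeeping their directions and composition so that the endpoints match, together with making sure the transport in the conclusion of \cite[Theorem~2.7.2]{TheBook} is taken in the family $\idfunc[\UU]$, so that the univalence computation rule applies verbatim. As an alternative that fits the surrounding development, one could instead apply the encode--decode method of \cref{lem:encode-decode}, fixing $\pairr{A,a}$ as base point, taking the family $\pairr{B,b}\mapsto\sm{e:\eqv AB}e(a)=b$ with distinguished point $\pairr{\idfunc[A],\refl{a}}$, and checking that its total space is contractible; the latter reduces, after reordering the $\Sigma$'s and contracting the based path space over $b$, to the contractibility of $\sm{B:\UU}(A=B)$, i.e.\ to univalence again.
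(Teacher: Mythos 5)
Your proof is correct and takes essentially the same route as the paper's: characterize the identity type of the pair via Theorem~2.7.2 of the book, reindex the base $A=B$ along univalence, and then compute the transported point to $e(a)$. The only cosmetic difference is that the paper justifies the transport computation by equivalence induction, whereas you invoke the computation rule for transport along $\ua(e)$; these are the same fact.
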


\begin{proof}[Construction]
By Theorem 2.7.2 of \cite{TheBook}, the type on the left hand side is
equivalent to the type $\sm{p:A=B}\trans{p}{a}=b$.
(As in \cite{TheBook}, $p_*$ denotes the transport function relative to a 
fibration.)
By the univalence axiom, the map 
\begin{equation*}
\mathsf{idtoequiv}_{A,B}:(A=B)\to (\eqv{A}{B})
\end{equation*}
is an equivalence for each $B:\UU$. 
Therefore, we have an equivalence of type
\begin{equation*}
\eqv{\Big(\sm{p:A=B}\trans{p}{a}=b\Big)}{\Big(\sm{e:\eqv{A}{B}}\trans{\mathsf{equivtoid}(e)}{a}=b\Big)}
\end{equation*} 
Moreover, by equivalence induction (the analogue of path induction for 
equivalences), we can compute the transport:
\begin{equation*}
\trans{\mathsf{equivtoid}(e)}{a}=e(a).
\end{equation*}
It follows that $\eqv{(\trans{\mathsf{equivtoid}(e)}{a}=b)}
{(e(a)=b)}$.
\end{proof}

Furthermore, we will invoke the following general lemma which computes equality of pointed
equivalences.

\begin{lem}\label{lem:equiv_of_ptdequiv}
For any $\pairr{e,p},\pairr{f,q}:\sm{e:\eqv{A}{B}}e(a)=b$, we have an equivalence of type
\begin{equation*}
\eqv{\Big(\pairr{e,p}=\pairr{f,q}\Big)}{\Big(\sm{h:e\htpy f} p=\ct{h(a)}{q}\Big)}.
\end{equation*}
\end{lem}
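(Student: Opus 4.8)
The plan is to compute this identity type by the same strategy used for \cref{lem:equiv_of_ptdtype}: peel off the $\Sigma$-type, reduce equality of equivalences to a homotopy, and simplify the resulting dependent identity into the stated equation. First I would apply Theorem~2.7.2 of \cite{TheBook} to the type $\sm{e:\eqv{A}{B}}e(a)=b$, with fibration $P$ given by $g\mapsto(g(a)=b)$ over the type of equivalences. This yields
\[
\eqv{\Big(\pairr{e,p}=\pairr{f,q}\Big)}{\Big(\sm{r:e=f}\trans{r}{p}=q\Big)},
\]
so it remains to reindex the base $\sm{r:e=f}$ by homotopies and to rewrite the condition $\trans{r}{p}=q$ as $p=\ct{h(a)}{q}$.

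Next I would replace paths of equivalences by homotopies. Since being an equivalence is a mere proposition, passing to underlying functions is an equivalence $(e=f)\simeq(\mathsf{pr}_1(e)=\mathsf{pr}_1(f))$, and function extensionality identifies the target with $(e\htpy f)$. Under the composite equivalence a homotopy $h:e\htpy f$ corresponds to a path $r:e=f$, and the crucial bookkeeping fact is that $\map{\apfunc{\mathsf{ev}_a}}{r}=h(a)$, where $\mathsf{ev}_a$ denotes evaluation at $a$; this is precisely the standard relation between $\happly$ and application of the evaluation map.

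Then I would compute the transport. Writing $P$ as the composite $g\mapsto(\mathsf{ev}_a(g)=b)$, Lemma~2.3.10 of \cite{TheBook} reindexes the transport along $\map{\apfunc{\mathsf{ev}_a}}{r}$, and Lemma~2.11.2 for the family $y\mapsto(y=b)$ gives $\trans{r}{p}=\ct{\opp{(\map{\apfunc{\mathsf{ev}_a}}{r})}}{p}$. By the preceding step this is $\ct{\opp{h(a)}}{p}$, so the fibrewise condition $\trans{r}{p}=q$ becomes $\ct{\opp{h(a)}}{p}=q$; since concatenating on the left with $h(a)$ is an equivalence of path types, this is equivalent to $p=\ct{h(a)}{q}$. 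Composing the three equivalences gives the claim.

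I expect the main obstacle to be the coherence in the second step: verifying that the path $r$ obtained from a homotopy $h$ really satisfies $\map{\apfunc{\mathsf{ev}_a}}{r}=h(a)$, so that the transport computed in the third step is genuinely $\ct{\opp{h(a)}}{p}$ rather than some merely propositionally equal path. Everything else is a routine combination of the cited lemmas with the groupoid laws for concatenation.
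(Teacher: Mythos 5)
Your proposal is correct and follows essentially the same route as the paper: Theorem~2.7.2 to split the $\Sigma$-type, function extensionality (together with $\isequiv$ being a mere proposition) to replace paths of equivalences by homotopies, and then an identification of the transported condition with $p=\ct{h(a)}{q}$. The only immaterial difference is that the paper dispatches the last step --- including the coherence $\apfunc{\mathsf{ev}_a}(\mathsf{htpytoid}(h))=h(a)$ that you flag as the main obstacle --- in one stroke by homotopy induction on $h$, whereas you compute the transport explicitly via Lemmas~2.3.10 and~2.11.2 of the book.
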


\begin{proof}[Construction]
The type $\pairr{e,p}=\pairr{f,q}$ is equivalent
to the type $\sm{h:e=f}\trans{h}{p}=q$.
Note that by the principle of function extensionality,
the map $\mathsf{idtohtpy}:(e=f)\to(e\htpy f)$
is an equivalence. Furthermore, it follows by homotopy induction that for any 
$h:e\htpy f$ we have an equivalence of type
\begin{equation*}
\eqv{(\trans{\mathsf{htpytoid}(h)}{p}=q)}
    {(p= \ct{h(a)}{q})}.\qedhere
\end{equation*}
\end{proof}

We are now ready to prove that the type of pointed $2$\nobreakdash-element sets is contractible.

\begin{proof}[Proof of \autoref{thm:ptd_2elt_sets}]
We take $\pairr{\Sn^0,\north}$ as the center of contraction. We need to define
an identification of type $\pairr{\Sn^0,\north}=\pairr{A,a}$, 
for any $A:\UU_{\Sn^0}$ and $a:A$. 

Let $A:\UU_{\Sn^0}$ and $a:A$. By \autoref{lem:equiv_of_ptdtype}, we have
an equivalence of type
\begin{equation*}
\eqv{\Big(\pairr{\Sn^0,\north}=\pairr{A,a}\Big)}{\Big(\sm{e:\eqv{\Sn^0}{A}}e(\north)=a\Big)}.
\end{equation*}
Hence we can complete the proof by constructing a term of type 
\begin{equation}\label{eq:Sn0_ptdequiv}
\sm{e:\eqv{\Sn^0}{A}}e(\north)=a.
\end{equation} 

It is time for a little trick. Instead of constructing a term the type in
\autoref{eq:Sn0_ptdequiv}, we will show that this type is contractible.
Since being contractible is a mere proposition, 
this allows us to eliminate the assumption $\sbrck{\Sn^0=A}$
into the assumption $p:\Sn^0=A$. Note that the end point of $p$ is free.
Therefore we eliminate $p$ into $\refl{\Sn^0}$. 
Thus, we see that it suffices to show that the type
\begin{equation*}
\sm{e:\eqv{\Sn^0}{\Sn^0}}e(\north)=a
\end{equation*}
is contractible for any $a:\Sn^0$. 

This can be done by case analysis on $a:\Sn^0$. Since we have the equivalence
$\mathsf{neg}:\eqv{\Sn^0}{\Sn^0}$ that swaps $\north$ and $\south$, it follows
that $\sm{e:\eqv{\Sn^0}{\Sn^0}}e(\north)=\north$ is contractible if and only
if $\sm{e:\eqv{\Sn^0}{\Sn^0}}e(\north)=\south$ is contractible. Therefore, we
only need to show that the type
\begin{equation*}
\sm{e:\eqv{\Sn^0}{\Sn^0}}e(\north)=\north
\end{equation*}
is contractible. 
For the center of contraction we take $\pairr{\idfunc[\Sn^0],\refl{\north}}$.
It remains to construct a term of type
\begin{equation*}
\prd{e:\eqv{\Sn^0}{\Sn^0}}{p:e(\north)=\north} \pairr{e,p}=\pairr{\idfunc[\Sn^0],\refl{\north}}.
\end{equation*} 

Let $e:\eqv{\Sn^0}{\Sn^0}$ and $p:e(\north)=\north$.
By \autoref{lem:equiv_of_ptdequiv}, we have an equivalence of type
\begin{equation*}
\eqv{\Big(\pairr{e,p}=\pairr{\idfunc[\Sn^0],\refl{\north}}\Big)}
    {\Big(\sm{h:e\htpy \idfunc[\Sn^0]} p=h(\north)\Big)}.
\end{equation*}
Hence it suffices
to construct a term of the type on the right hand side.

We define a homotopy $h:e\htpy \idfunc[\Sn^0]$ by case analysis: we take
$h(\north)\defeq p$. To define $h(\south)$, note that the type
$\hfib{e}{\south}$ is contractible. Therefore, we have a center of contraction
$\pairr{x,q}:\hfib{e}{\south}$. Recall that equality on $\Sn^0$ is decidable,
so we have a term of type $(x=\north)+(x=\south)$. Since $e(\north)=\north$,
it follows that $\neg(x=\north)$. Therefore we have $x=\south$ and $e(x)=\south$.
It follows that $e(\south)=\south$, which we use to define $h(\south)$. 
\end{proof}

The main application we have in mind for \autoref{thm:ptd_2elt_sets}, is
a computation of the identity type of the type of $2$\nobreakdash-element sets, via
the encode-decode method, \autoref{lem:encode-decode}.

\begin{cor}\label{cor:id_U2}
The canonical map
\begin{equation*}
\mathsf{enc}_{{\Sn^{0}},\north} : \prd*{A:\UU_{\Sn^{0}}} ({\Sn^{0}}= A)\to A
\end{equation*}
is an equivalence.
\end{cor}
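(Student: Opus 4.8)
The plan is to obtain this as an immediate application of the encode-decode method, \autoref{lem:encode-decode}, feeding it \autoref{thm:ptd_2elt_sets} as the contractibility hypothesis. First I would instantiate the lemma with the pointed type $\UU_{\Sn^0}$, pointed at $\Sn^0$ (using $\tproj{}{\refl{\Sn^0}} : \sbrck{\Sn^0=\Sn^0}$ to witness that $\Sn^0$ is a $2$-element set), with the type family $B:\UU_{\Sn^0}\to\UU$ defined by $B(A)\defeq A$ — which is legitimate under the convention that a term of $\UU_{\Sn^0}$ is regarded as its underlying small type — and with the distinguished point $b_0\defeq\north:B(\Sn^0)\jdeq\Sn^0$.

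With these choices, the fiberwise map produced by the lemma is $\mathsf{enc}_{\Sn^0,\north}:\prd*{A:\UU_{\Sn^0}}(\Sn^0=A)\to A$, defined by sending $\refl{\Sn^0}$ to $\north$. This is precisely the canonical map named in the statement of the corollary, so condition (ii) of \autoref{lem:encode-decode} is exactly the assertion we wish to prove.

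It therefore remains to check condition (i): that the total space $\sm{A:\UU_{\Sn^0}}B(A)$ is contractible. But since $B(A)\jdeq A$, this total space is definitionally $\sm{A:\UU_{\Sn^0}}A$, the type of pointed $2$-element sets, whose contractibility is exactly \autoref{thm:ptd_2elt_sets}. Invoking the implication (i) $\Rightarrow$ (ii) of \autoref{lem:encode-decode} then finishes the argument.

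There is essentially no obstacle here beyond bookkeeping, since all the real work has been done in \autoref{thm:ptd_2elt_sets}. The only point that warrants a moment's care is verifying that the abstractly-defined map $\mathsf{enc}$ from the lemma coincides with the canonical map of the corollary; this holds because both are determined by the induction principle for the identity type from their common value $\north$ at $\refl{\Sn^0}$.
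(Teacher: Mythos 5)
Your proposal is correct and is exactly the argument the paper intends: the corollary is obtained by instantiating the encode-decode lemma (\autoref{lem:encode-decode}) at the pointed type $(\UU_{\Sn^0},\Sn^0)$ with the identity family $B(A)\defeq A$ and base point $\north$, and discharging condition (i) via \autoref{thm:ptd_2elt_sets}. The bookkeeping points you flag (the legitimacy of $B(A)\defeq A$ and the identification of the abstractly-defined $\mathsf{enc}$ with the canonical map) are handled the same way in the paper, which presents the corollary as an immediate consequence.
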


Another way of stating the following theorem, is by saying that the map
$\unit\to\UU_{\Sn^{0}}$ \emph{classifies} the ${\Sn^{0}}$\nobreakdash-bundles.

\begin{thm}\label{lem:classifyer_U2}
Let $B:A\to\UU_{\Sn^{0}}$ be a ${\Sn^{0}}$\nobreakdash-bundle. Then the square
\begin{equation*}
\begin{tikzcd}
\sm{x:A}B(x) \arrow[r] \arrow[d,swap,"\proj 1"] & \unit \arrow[d,"{\Sn^{0}}"] \\
A \arrow[r,swap,"B"] & \UU_{\Sn^{0}}
\end{tikzcd}
\end{equation*}
commutes via a homotopy $R_{A,B}:\prd{x:A}{y:B(x)} \eqv{B(x)}{\Sn^0}$, and is a pullback square. 
\end{thm}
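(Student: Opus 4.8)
The plan is to first exhibit the commutativity datum $R_{A,B}$, and then to identify the canonical (gap) map from $\sm{x:A}B(x)$ into the pullback as the total map of a fibrewise equivalence, so that the pullback property falls out of \autoref{cor:id_U2} together with Theorem~4.7.7 of \cite{TheBook}.

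For commutativity, note that the two composites around the square send a point $\pairr{x,y}:\sm{x:A}B(x)$ to $\Sn^0$ and to $B(x)$ respectively, both regarded as elements of $\UU_{\Sn^0}$; so what is needed is, for each $x:A$ and $y:B(x)$, an identification $B(x)=\Sn^0$ in $\UU_{\Sn^0}$. Since $\UU_{\Sn^0}$ is a subtype of $\UU$, such an identification carries the same information as a path $B(x)=\Sn^0$ in $\UU$, which by univalence is the same as an equivalence $\eqv{B(x)}{\Sn^0}$ --- exactly the type promised for $R_{A,B}$. I would produce it by applying the inverse of the encoding equivalence of \autoref{cor:id_U2} at $B(x)$ to the point $y$, yielding a path $\Sn^0=B(x)$, and then inverting; this defines $R_{A,B}\pairr{x,y}$ and supplies the witness of commutativity.

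Next I would recall that a commuting square is a pullback exactly when its gap map to the pullback $\sm{x:A}\sm{u:\unit}(B(x)=\Sn^0)$ --- with the identity type taken in $\UU_{\Sn^0}$, the common codomain --- is an equivalence. As $\unit$ is contractible this pullback is equivalent to $\sm{x:A}(B(x)=\Sn^0)$, and under this the gap map becomes $\pairr{x,y}\mapsto\pairr{x,w_x(y)}$, where $w_x:B(x)\to(B(x)=\Sn^0)$ sends $y$ to the commutativity path fixed above. In other words the gap map is $\total w$ for the fibrewise family $w:\prd{x:A}B(x)\to(B(x)=\Sn^0)$.

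Finally I would check that each $w_x$ is an equivalence: by construction $w_x(y)=\opp{(\mathsf{enc}^{-1}(y))}$, so $w_x$ is the composite of the inverse encoding map $\mathsf{enc}^{-1}:B(x)\to(\Sn^0=B(x))$, an equivalence by \autoref{cor:id_U2}, with path inversion $(\Sn^0=B(x))\to(B(x)=\Sn^0)$, always an equivalence. Hence $w$ is a fibrewise equivalence and, by Theorem~4.7.7 of \cite{TheBook}, $\total w$ --- that is, the gap map --- is an equivalence, so the square is a pullback. The only delicate point is bookkeeping: the homotopy $R_{A,B}$ must be defined through $\mathsf{enc}^{-1}$ precisely so that the gap map it induces agrees up to homotopy with $\total w$; tracking the directions of the path inversions and the passage between equality in $\UU_{\Sn^0}$ and in $\UU$ is routine but must be done consistently.
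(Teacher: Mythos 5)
Your proposal is correct and follows essentially the same route as the paper: both use \autoref{cor:id_U2} to obtain a fibrewise equivalence between $B(x)$ and the identity type with $\Sn^0$, apply Theorem~4.7.7 of \cite{TheBook} to promote it to an equivalence of total spaces, and conclude by comparing against the canonical pullback $\sm{x:A}(\Sn^0=B(x))$. The only cosmetic difference is that you phrase this via the gap map and an extra path inversion, while the paper factors the outer square through the inner pullback square directly.
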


\begin{proof}[Construction]
Since $B(x):\UU_{\Sn^{0}}$ for any $x:A$, 
we have by \autoref{cor:id_U2} the fiberwise equivalence
\begin{equation*}
\mathsf{enc}_{\Sn^0,\N}(B(x)):\eqv{(\Sn^0=B(x))}{B(x)}
\end{equation*} 
indexed by $x:A$. 
Hence it follows by Theorem 4.7.7 of \cite{TheBook} that the induced map
of total spaces is an equivalence. It follows that the diagram
\begin{equation*}
\begin{tikzcd}
\sm{x:A}B(x) \arrow[drr,bend left=15] \arrow[ddr,bend right=15,swap,"\proj 1"] \arrow[dr,densely dotted,"\eqvsym"] \\
& \sm{x:A} ({\Sn^{0}}=B(x)) \arrow[r] \arrow[d,swap,"\proj 1"] & \unit \arrow[d,"{\lam{\nameless}\Sn^{0}}"] \\
& A \arrow[r,swap,"B"] & \UU_{\Sn^{0}}
\end{tikzcd}
\end{equation*}
commutes. Since the inner square is a pullback square, it follows that the outer square is a pullback square.
\end{proof}

\section{Finite dimensional real projective spaces}
\label{sec:fdrp}

Classically, the $(n+1)$-st real projective space can be obtained by attaching an $(n+1)$-cell to the $n$-th real projective space. This suggests a way of defining the real projective spaces that involves simultaneously defining $\rprojective{n}$ and an attaching map $\alpha_n : \Sn^n\to\rprojective{n}$. Then we obtain $\rprojective{n+1}$ as the mapping cone of $\alpha_n$, i.e., as a pushout
\begin{equation*}
\begin{tikzcd}
\Sn^n \arrow[r,"\alpha_n"] \arrow[d] & \rprojective{n} \arrow[d] \\
\unit \arrow[r] & \rprojective{n+1},
\end{tikzcd}
\end{equation*}
and we have to somehow find a way to define the attaching map $\alpha_{n+1}:\Sn^{n+1}\to\rprojective{n}$ to continue the inductive procedure.
However, it is somewhat tricky to obtain these attaching maps directly, and we have chosen to follow a closely related path towards the definition of the real projective spaces that takes advantage of the machinery of dependent type theory. 

Observe that the attaching map $\alpha_n:\Sn^n\to\rprojective{n}$ is just the tautological bundle (or the quotient map that identifies the antipodal points). This suggests that we may proceed by defining simultaneously the real projective space $\rprojective{n}$ and its tautological bundle $\mathsf{cov}^n_{\Sn^0}$. The tautological bundle on $\rprojective{n}$ is an $\Sn^0$-bundle, so it can be described as a map $\rprojective{n}\to\UU_{\Sn^0}$. We perform this construction in \autoref{defn:realprojective} using the properties of the type of 2-element types developed in \autoref{sec:UUS0}, and in \autoref{thm:Sn_totalcov} we show that the total space of the tautological bundle on $\rprojective{n}$ is the $n$-sphere. 

\begin{defn}\label{defn:realprojective}
We define simultaneously for each $n:\N_{-1}$, 
the \define{$n$-dimensional real projective space} $\rprojective{n}$, 
and the \define{tautological bundle} $\mathsf{cov}^n_{\Sn^{0}}:\rprojective{n}\to \UU_{\Sn^{0}}$.
\end{defn}

\begin{proof}[Construction]
The construction is by induction on $n:\N_{-1}$.
For the base case $n\defeq -1$, 
we take $\rprojective{-1}\defeq\emptyt$. 
Then there is a unique map of type $\rprojective{-1}\to \UU_{\Sn^{0}}$, which we
take as our definition of $\mathsf{cov}_{\Sn^{0}}^{-1}$.

For the inductive step, suppose $\rprojective{n}$ and $\mathsf{cov}^n_{\Sn^{0}}$ are defined. Then we define $\rprojective{n+1}$ to be the pushout
\begin{equation*}
\begin{tikzcd}
\sm{x:\rprojective{n}}\mathsf{cov}^n_{\Sn^{0}}(x) \arrow[d,swap,"\proj 1"] \arrow[r] & \unit \arrow[d,"\base"] \\
\rprojective{n} \arrow[r,swap,"\mathsf{incl}"] & \rprojective{n+1}
\end{tikzcd}
\end{equation*}
In other words, 
$\rprojective{n+1}$ is the \emph{mapping cone} of the tautological bundle, 
when we view the tautological bundle as the projection 
$\proj 1:(\sm{x:\rprojective{n}}\mathsf{cov}_{\Sn^0}^n(x))\to\rprojective{n}$. 

To define $\mathsf{cov}^{n+1}_{\Sn^{0}}:\rprojective{n+1}\to \UU_{\Sn^{0}}$
we use the universal property of $\rprojective{n+1}$. 
Therefore, it suffices to show that the outer square in the diagram
\begin{equation}\label{eq:diagram}
\begin{tikzcd}
\sm{x:\rprojective{n}}\mathsf{cov}^n_{\Sn^{0}}(x) \arrow[d,swap,"\proj 1"] \arrow[r] & \unit \arrow[d,swap,"\base"] \arrow[ddr,bend left=15,"\Sn^0"]\\
\rprojective{n} \arrow[drr,bend right=15,swap,"\mathsf{cov}^n_{\Sn^{0}}"] \arrow[r,swap,"\mathsf{incl}"] & \rprojective{n+1} \arrow[dr,densely dotted] \\
& & \UU_{\Sn^{0}}
\end{tikzcd}
\end{equation}
commutes. Indeed, in \autoref{lem:classifyer_U2} we have constructed a homotopy 
\begin{equation*}
R_n\defeq R_{\rprojective{n},\mathsf{cov}^n_{\Sn^0}}:\prd{x:\rprojective{n}}{y:\mathsf{cov}^n_{\Sn^0}} \eqv{\mathsf{cov}^n_{\Sn^0}(x)}{\Sn^0},
\end{equation*}
and in fact, this square is a pullback.
\end{proof}

\begin{eg}
We have $\rprojective{-1}=\emptyt$, $\rprojective{0}=\unit$, and $\rprojective{1}=\Sn^1$. 
\end{eg}

\begin{thm}\label{thm:Sn_totalcov}
For each $n:\N_{-1}$, there is an equivalence
\begin{equation*}
e_n:\eqv{\Sn^n}{\sm{x:\rprojective{n}}\mathsf{cov}^n_{\Sn^{0}}(x)}.
\end{equation*}
\end{thm}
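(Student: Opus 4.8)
The plan is to induct on $n:\N_{-1}$, using the flattening lemma to compute the total space of $\mathsf{cov}^{n+1}_{\Sn^0}$ from that of $\mathsf{cov}^n_{\Sn^0}$, and then to recognize the outcome as a join. For the base case $n\defeq-1$ both $\Sn^{-1}$ and $\rprojective{-1}$ are the empty type, so the total space $\sm{x:\emptyt}\mathsf{cov}^{-1}_{\Sn^0}(x)$ is again empty and $e_{-1}$ is immediate. For the inductive step, abbreviate $T_n\defeq\sm{x:\rprojective{n}}\mathsf{cov}^n_{\Sn^0}(x)$ and assume the inductive hypothesis $e_n:\eqv{\Sn^n}{T_n}$. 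By construction (\autoref{defn:realprojective}), $\rprojective{n+1}$ is the pushout of $\proj1:T_n\to\rprojective{n}$ along the terminal map $T_n\to\unit$, and $\mathsf{cov}^{n+1}_{\Sn^0}$ is precisely the family $P_{tot}$ produced by the flattening lemma for the families $\mathsf{cov}^n_{\Sn^0}$ on $\rprojective{n}$ and the constant family $\lam{\nameless}\Sn^0$ on $\unit$, made compatible by the fiberwise equivalence $R_n$ from \autoref{lem:classifyer_U2}. Hence \autoref{lem:flattening} directly exhibits $\sm{z:\rprojective{n+1}}\mathsf{cov}^{n+1}_{\Sn^0}(z)$ as the pushout of the span whose apex is $\sm{(x,y):T_n}\mathsf{cov}^n_{\Sn^0}(x)$, where the leg to $T_n$ sends $\pairr{\pairr{x,y},p}$ to $\pairr{x,p}$ and the leg to $\Sn^0$ sends it to $R_n(x,y)(p)$.

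The crux is to recognize this span as the defining span of a join. I will identify the apex $\sm{(x,y):T_n}\mathsf{cov}^n_{\Sn^0}(x)$, which is $\sm{x:\rprojective{n}}\mathsf{cov}^n_{\Sn^0}(x)\times\mathsf{cov}^n_{\Sn^0}(x)$, with $\Sn^0\times T_n$ via the map $\phi$ sending $\pairr{x,y,p}$ to $\pairr{R_n(x,y)(p),\pairr{x,p}}$. Under $\phi$ the two legs of the span become exactly the projections $\proj1:\Sn^0\times T_n\to\Sn^0$ and $\proj2:\Sn^0\times T_n\to T_n$, so the pushout is by definition the join $\join{\Sn^0}{T_n}$. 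Composing with $e_n$ and with the equivalence $\Sn^{n+1}\simeq\join{\Sn^0}{\Sn^n}$ recalled in the introduction then yields $\sm{z:\rprojective{n+1}}\mathsf{cov}^{n+1}_{\Sn^0}(z)\simeq\join{\Sn^0}{T_n}\simeq\join{\Sn^0}{\Sn^n}\simeq\Sn^{n+1}$, which after inversion is the desired $e_{n+1}$.

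The main obstacle is verifying that $\phi$ is an equivalence, since this is what makes the flattening pushout coincide with the join. The point is that for fixed $x$ and $p:\mathsf{cov}^n_{\Sn^0}(x)$, as $y$ ranges over the two-element set $\mathsf{cov}^n_{\Sn^0}(x)$ the value $R_n(x,y)(p)$ must range bijectively over $\Sn^0$; equivalently, $R_n(x,y)(p)$ is the indicator of the decidable proposition $p=y$. This rests on the defining property $R_n(x,y)(y)=\north$ together with the decidability of equality on two-element sets (so that $p$ is either $y$ or the unique other element, forcing $R_n(x,y)(p)=\south$ in the latter case by injectivity). Establishing $R_n(x,y)(y)=\north$ requires unwinding the encode--decode equivalence $\mathsf{enc}_{\Sn^0,\north}$ of \autoref{cor:id_U2}: its inverse sends $y$ to the path whose induced equivalence carries $\north$ to $y$, and $R_n(x,y)$ is the inverse of that equivalence. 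I expect this unwinding, rather than the pushout bookkeeping, to be where the real care is needed.
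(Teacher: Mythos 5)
Your proof follows the paper's argument essentially step for step: induction on $n$, the flattening lemma applied to the defining pushout of $\rprojective{n+1}$, and a comparison of the resulting flattened span with the join span $\Sn^0 \leftarrow \Sn^0\times\bigl(\sm{x:\rprojective{n}}\mathsf{cov}^n_{\Sn^0}(x)\bigr) \rightarrow \sm{x:\rprojective{n}}\mathsf{cov}^n_{\Sn^0}(x)$ via a span morphism whose middle map is built from $R_n$. The one place you diverge---verifying that this middle map is an equivalence by characterizing $R_n(x,y)(p)$ as the indicator of the decidable proposition $y=p$, rather than by reducing the symmetry $R_n(x,y)(p)=R_n(x,p)(y)$ to commutativity of $\Zmodtwo$ using connectedness of $\UU_{\Sn^0}$---is sound and is in fact exactly the route the paper reports taking in its Lean formalization (\autoref{sec:formalization}); the only ingredient the paper's written proof supplies that yours does not is the naturality square for $e_{n+1}$ over the inclusion $\Sn^n\to\Sn^{n+1}$, which is not part of the statement but is used later in \autoref{thm:RPoo_US0}.
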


In other words, $\rprojective{n+1}$ is obtained from $\rprojective{n}$ by attaching a single $(n+1)$\nobreakdash-disk, i.e., as a pushout
\begin{equation*}
\begin{tikzcd}
\Sn^n \arrow[r] \arrow[d,swap,"\proj1\circ e_n"] & \unit \arrow[d] \\
\rprojective{n} \arrow[r] & \rprojective{n+1}.
\end{tikzcd}
\end{equation*}

\begin{proof}
For $n\jdeq -1$, we have $\rprojective{-1}\jdeq\emptyt$ and the unique tautological bundle $\mathsf{cov}^{-1}_{\Sn^{0}}$. Therefore the type $\sm{x:\rprojective{-1}}\mathsf{cov}^{-1}_{\Sn^0}(x)$ is equivalent to the empty type, which is $\Sn^{-1}$ by definition. This gives the base case.

Now assume that we have an equivalence $e_n:\eqv{\Sn^n}{\sm{x:\rprojective{n}}\mathsf{cov}^n_{\Sn^{0}}(x)}$. 
Our goal is to construct the equivalence
\begin{equation*}
e_{n+1}:\eqv{\Sn^{n+1}}{\sm{x:\rprojective{n+1}}\mathsf{cov}^{n+1}_{\Sn^{0}}(x)}.
\end{equation*}
such that the square
\begin{equation}\label{eq:Sn_totalcov_natural}
\begin{tikzcd}
\Sn^{n} \arrow[d,swap,"e_{n}"] \arrow[r,"\inl"] & \Sn^{n+1} \arrow[d,"e_{n+1}"] \\
\sm{x:\rprojective{n}}\mathsf{cov}_{\Sn^0}^n(x) \arrow[r] & \sm{x:\rprojective{n+1}}\mathsf{cov}_{\Sn^0}^{n+1}(x)
\end{tikzcd}
\end{equation}
commutes. By the functoriality of the join (or equivalently, by equivalence induction on $e_n$), it suffices to find an equivalence
\begin{equation*}
\alpha:\eqv{\join{\Big(\sm{x:\rprojective{n}}\mathsf{cov}_{\Sn^0}^n(x)\Big)}{\Sn^0}}{\sm{x:\rprojective{n+1}}\mathsf{cov}_{\Sn^0}^{n+1}(x)},
\end{equation*}
such that the bottom triangle in the diagram
\begin{equation*}
\begin{tikzcd}
\Sn^n \arrow[r,"\inl"] \arrow[d,swap,"e_n"] & \Sn^{n+1} \arrow[d,"\join{e_n}{\idfunc[\Sn^0]}"] \\
\sm{x:\rprojective{n}}\mathsf{cov}_{\Sn^0}^n(x) \arrow[r,"\inl"] \arrow[dr] & \join{\Big(\sm{x:\rprojective{n}}\mathsf{cov}_{\Sn^0}^n(x)\Big)}{\Sn^0} \arrow[d,"\alpha"] \\
& \sm{x:\rprojective{n+1}}\mathsf{cov}_{\Sn^0}^{n+1}(x)
\end{tikzcd}
\end{equation*}
commutes.
We construct this equivalence using the flattening lemma, \autoref{lem:flattening}, from which we get a pushout square:
\begin{equation*}
\begin{tikzcd}[column sep=0.5em]
\sm{x:\rprojective{n}}{y:\mathsf{cov}_{\Sn^0}^n(x)}\mathsf{cov}_{\Sn^0}(x) \arrow[r] \arrow[d] & \sm{t:\unit}\Sn^0 \arrow[d] \\
\sm{x:\rprojective{n}}\mathsf{cov}_{\Sn^0}^n(x) \arrow[r] & \sm{x:\rprojective{n+1}}\mathsf{cov}_{\Sn^0}^{n+1}(x)
\end{tikzcd}
\end{equation*}
We can calculate this pushout by constructing a natural transformation of spans (diagrams in $\UU$ of the form $\cdot\leftarrow\cdot\rightarrow\cdot$), as indicated by the diagram in Fig.~\ref{fig:sphere-equiv}.
\begin{figure*}
  \centering
\begin{tikzcd}[column sep=6em]
\sm{x:\rprojective{n}}\mathsf{cov}_{\Sn^0}^n(x) \arrow[d,swap,"\idfunc"]
  & \sm{x:\rprojective{n}}{y:\mathsf{cov}_{\Sn^0}^n(x)}\mathsf{cov}_{\Sn^0}(x) \arrow[l,swap,"\pairr{x,z}\mapsfrom\pairr{x,y,z}" yshift=1ex] \arrow[d,densely dotted,"u"] \arrow[r,"\pairr{x,y,z}\mapsto\pairr{\ttt,R_n(x,y,z)}" yshift=1ex] 
  & \sm{t:\unit}\Sn^0 \arrow[d,"\proj 2"] \\
\sm{x:\rprojective{n}}\mathsf{cov}_{\Sn^0}^n(x)
  &
\Big(\sm{x:\rprojective{n}}\mathsf{cov}_{\Sn^0}^n(x)\Big)\times\Sn^0 \arrow[l,"\pi_1"] \arrow[r,swap,"\pi_2"]
  & \Sn^0
\end{tikzcd}
\caption{Map of spans used in the proof of Thm~\ref{thm:Sn_totalcov}. The map $u$ is given by $\pairr{x,y,z}\mapsto\pairr{x,z,R_n(x,y,z)}$.}
\label{fig:sphere-equiv}
\end{figure*}
To show that the map $u$ in Fig.~\ref{fig:sphere-equiv} is an equivalence, it suffices to show that $R_n(x,y,z)=R_n(x,y,z)$ for any $x$, $y$, and $z$, because then it follows that $u$ is homotopic to the total map of a fiberwise equivalence. More generally, it suffices to show that $R_{\UU_{\Sn^0},T}(X,x,y)=R_{\UU_{\Sn^0},T}(X,y,x)$, where $T$ is the tautological bundle on $\UU_{\Sn^0}$. Since $\UU_{\Sn^0}$ is connected and since our goal is a mere proposition, we only need to verify the claim at the base point $\Sn^0$ of $\UU_{\Sn^0}$. This boils down to verifying that the group multiplication of $\Zmodtwo$ is indeed commutative.
\end{proof}

\begin{cor}
We obtain the fiber sequence
\begin{equation*}
\begin{tikzcd}
\Sn^0 \arrow[r,hook] & \Sn^n \arrow[r,->>] & \rprojective{n}.
\end{tikzcd}
\end{equation*}
Hence, for each $k\geq 2$ we have $\pi_k(\Sn^n)=\pi_k(\rprojective{n})$. 
\end{cor}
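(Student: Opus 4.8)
The plan is to read the fiber sequence directly off the tautological bundle and then feed it into the long exact sequence of homotopy groups.

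First I would record the fiber sequence. Recall that the notation $\Sn^0\hookrightarrow\Sn^n\twoheadrightarrow\rprojective{n}$ demands a dependent type over $\rprojective{n}$ whose total space is equivalent to $\Sn^n$ and whose fiber over the base point is equivalent to $\Sn^0$. I take this dependent type to be the tautological bundle $\mathsf{cov}^n_{\Sn^{0}}:\rprojective{n}\to\UU_{\Sn^{0}}$, viewed as a family of small types. Its total space $\sm{x:\rprojective{n}}\mathsf{cov}^n_{\Sn^{0}}(x)$ is equivalent to $\Sn^n$ by \autoref{thm:Sn_totalcov}, and under that equivalence $e_n$ the bundle projection $\proj1$ becomes the map $\proj1\circ e_n:\Sn^n\to\rprojective{n}$. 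Each fiber $\mathsf{cov}^n_{\Sn^{0}}(x)$ is by construction a $2$-element type, hence a set equivalent to $\Sn^0$; in particular this holds over any chosen base point. This is exactly the data the notation asks for.

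Second, I would obtain the isomorphism of homotopy groups from the long exact sequence of a fibration (cf.\ Section~8.4 of \cite{TheBook}), applied to the pointed map $\proj1\circ e_n$. The decisive input is that the fiber $\Sn^0$ is a \emph{set}: as a $2$-element type it is equivalent to $\bool$, which has decidable equality and is therefore $0$-truncated, so that $\pi_k(\Sn^0)$ is trivial for every $k\geq1$. For $k\geq2$ the relevant segment of the long exact sequence is
\begin{equation*}
\pi_k(\Sn^0)\to\pi_k(\Sn^n)\to\pi_k(\rprojective{n})\to\pi_{k-1}(\Sn^0),
\end{equation*}
and both outer terms vanish because $k\geq1$ and $k-1\geq1$. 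Exactness then gives $\pi_k(\Sn^n)=\pi_k(\rprojective{n})$, and one sees that the hypothesis $k\geq2$ is exactly what is needed to kill the neighbouring fiber group $\pi_{k-1}(\Sn^0)$.

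I do not expect a genuine obstacle here, since both halves follow from \autoref{thm:Sn_totalcov} and the triviality of the higher homotopy groups of a set. The point needing the most care is the base-point bookkeeping required to invoke the long exact sequence: one fixes compatible base points of $\Sn^n$ and $\rprojective{n}$ (available for $n\geq0$) so that $\proj1\circ e_n$ is a pointed map whose fiber is a pointed $2$-element set. If one wished to sidestep the full long exact sequence, an alternative is to argue that a map with set fibers induces an equivalence on iterated loop spaces in degrees $\geq2$, using that the loop space of the set $\Sn^0$ is contractible; but invoking the long exact sequence is the most economical route.
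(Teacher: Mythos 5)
Your proposal is correct and follows essentially the same route as the paper: identify the fiber sequence with the tautological bundle via \autoref{thm:Sn_totalcov}, then apply the long exact sequence of homotopy groups and use that $\pi_k(\Sn^0)=0$ for $k\geq 1$ (since $\Sn^0$ is a set) to conclude. Your added remarks on base-point bookkeeping and on why $k\geq 2$ is exactly the needed hypothesis are accurate elaborations of what the paper leaves implicit.
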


\begin{proof}
Since we have the double cover $\mathsf{cov}^n_{\Sn^{0}}:\rprojective{n}\to\UU_{\Sn^{0}}$ with total space $\Sn^n$, we obtain the long exact sequence
\begin{equation*}
\begin{tikzcd}
  \cdots \arrow[r]
  & \pi_{k+1}(\Sn^n) \arrow[r] \arrow[d, phantom, ""{coordinate, name=Z}]
  & \pi_{k+1}(\rprojective n) \arrow[dll, rounded corners,
      to path={ -- ([xshift=8.5ex]\tikztostart.center)
                |- (Z) [near end]\tikztonodes
                -| ([xshift=-8ex]\tikztotarget.center) -- (\tikztotarget)}] \\
  \pi_k(\Sn^0) \arrow[r]
  & \pi_k(\Sn^n) \arrow[r] \arrow[d, phantom, ""{coordinate, name=W}]
  & \pi_k(\rprojective n) \arrow[dll, rounded corners,
      to path={ -- ([xshift=8.5ex]\tikztostart.center)
                |- (W) [near end]\tikztonodes
                -| ([xshift=-8ex]\tikztotarget.center) -- (\tikztotarget)}] \\
  \pi_{k-1}(\Sn^0) \arrow[r]
  & \pi_{k-1}(\Sn^n) \arrow[r]
  & \cdots
\end{tikzcd}
\end{equation*}
Since $\pi_k({\Sn^{0}})=0$ for $k\geq 1$, we get the desired isomorphisms.
\end{proof}

\section{The infinite dimensional real projective space}
\label{sec:idrp}

Observe that from the definition of $\rprojective{n}$ and its tautological 
cover, we obtain a commutative diagram of the form:
\begin{equation*}
\begin{tikzcd}[row sep=large,column sep=large]
\rprojective{-1} \arrow[r,"\mathsf{incl}"] \arrow[dr,swap,"\mathsf{cov}_{\Sn^0}^{-1}"] 
& \rprojective{0} \arrow[d,swap,near start,"\mathsf{cov}_{\Sn^0}^{0}"] \arrow[r,"\mathsf{incl}"] 
& \rprojective{1} \arrow[dl,swap,"\mathsf{cov}_{\Sn^0}^{1}"] \arrow[r,"\mathsf{incl}"] 
& \cdots \arrow[dll,"\mathsf{cov}_{\Sn^0}^{2}"]\\
& \UU_{\Sn^0}
\end{tikzcd}
\end{equation*}
Using this sequence, we define the infinite dimensional real projective space
and its tautological cover:

\begin{defn}
We define the \define{infinite real projective space} $\rprojective{\infty}$ to be the sequential colimit of the finite real projective spaces. The double covers on $\rprojective{n}$ define a cocone on the type sequence of real projective spaces, so we also obtain $\mathsf{cov}^\infty_{\Sn^{0}}:\rprojective{\infty}\to \UU_{\Sn^{0}}$. 
\end{defn}

\begin{thm}\label{thm:RPoo_US0}
The double cover $\mathsf{cov}^\infty_{\Sn^{0}}$ is an equivalence from $\rprojective{\infty}$ to $\UU_{\Sn^{0}}$. 
\end{thm}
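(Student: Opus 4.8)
The plan is to show that every fiber of $\mathsf{cov}^\infty_{\Sn^0}$ is contractible, which is equivalent to it being an equivalence. The main leverage is the pullback square produced by \autoref{lem:classifyer_U2}, applied to the $\Sn^0$-bundle $\mathsf{cov}^\infty_{\Sn^0}:\rprojective{\infty}\to\UU_{\Sn^0}$ itself:
\begin{equation*}
\begin{tikzcd}
\sm{x:\rprojective{\infty}}\mathsf{cov}^\infty_{\Sn^0}(x) \arrow[r,"t"] \arrow[d,swap,"\proj 1"] & \unit \arrow[d,"\Sn^0"] \\
\rprojective{\infty} \arrow[r,swap,"\mathsf{cov}^\infty_{\Sn^0}"] & \UU_{\Sn^0}.
\end{tikzcd}
\end{equation*}
The type $\UU_{\Sn^0}$ is connected, since it is by construction the connected component of $\Sn^0$. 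Hence a family of mere propositions over $\UU_{\Sn^0}$ holds everywhere as soon as it holds at the base point $\Sn^0$. As $\iscontr(\hfib{\mathsf{cov}^\infty_{\Sn^0}}{A})$ is a mere proposition for each $A$, it therefore suffices to prove that the single fiber $\hfib{\mathsf{cov}^\infty_{\Sn^0}}{\Sn^0}$ is contractible.

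Next I would reduce this to a statement about the total space. In the pullback square above the fibers of the two horizontal maps agree over corresponding points, so the fiber of $\mathsf{cov}^\infty_{\Sn^0}$ over $\Sn^0$ is equivalent to the fiber of $t$ over $\ttt$. Thus it is enough to show that the total space $\sm{x:\rprojective{\infty}}\mathsf{cov}^\infty_{\Sn^0}(x)$ is contractible: if it is, then $t$ is a map between contractible types, hence an equivalence, and its fiber over $\ttt$ is contractible, giving $\iscontr(\hfib{\mathsf{cov}^\infty_{\Sn^0}}{\Sn^0})$ as required.

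To compute the total space I would use that $\Sigma$ commutes with sequential colimits. Since $\rprojective{\infty}$ is the sequential colimit of the $\rprojective{n}$ and $\mathsf{cov}^\infty_{\Sn^0}$ is the map induced by the cocone of the $\mathsf{cov}^n_{\Sn^0}$, this yields
\begin{equation*}
\sm{x:\rprojective{\infty}}\mathsf{cov}^\infty_{\Sn^0}(x) \;\simeq\; \mathrm{colim}_n\,\sm{x:\rprojective{n}}\mathsf{cov}^n_{\Sn^0}(x).
\end{equation*}
By \autoref{thm:Sn_totalcov} each stage is $\Sn^n$, and the naturality square \eqref{eq:Sn_totalcov_natural} identifies the transition maps of this system with the standard inclusions $\inl:\Sn^n\to\Sn^{n+1}$. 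Hence the total space is the infinite sphere $\Sn^\infty\jdeq\mathrm{colim}_n\,\Sn^n$. Iterating $\Sn^{n+1}\simeq\join{\Sn^0}{\Sn^n}$ exhibits $\Sn^\infty$ as the infinite join power of $\Sn^0$, which by the join construction of the propositional truncation is $\sbrck{\Sn^0}$; and since $\Sn^0$ is (merely) inhabited, $\sbrck{\Sn^0}$ is contractible. This closes the argument.

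The steps that require genuine care, rather than routine diagram-chasing, are the last two: first, the interchange of $\Sigma$ with the sequential colimit, where one must check that the induced transition maps on total spaces are exactly the ones appearing in \eqref{eq:Sn_totalcov_natural} so that the colimit is recognizably $\mathrm{colim}_n\,\Sn^n$; and second, the contractibility of $\Sn^\infty$, i.e., the identification of the infinite join power of $\Sn^0$ with $\sbrck{\Sn^0}$ via the join construction. The remaining reductions (from ``equivalence'' to ``all fibers contractible'', from all fibers to the fiber over $\Sn^0$ by connectedness, and from that fiber to the total space by the pullback square) are formal and pose no obstacle.
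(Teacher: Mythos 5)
Your proposal is correct and follows the same overall decomposition as the paper: reduce to contractibility of the fiber over $\Sn^0$ by connectedness of $\UU_{\Sn^0}$, identify that fiber with the total space $\sm{x:\rprojective{\infty}}\mathsf{cov}^\infty_{\Sn^0}(x)$, commute $\Sigma$ past the sequential colimit, and recognize the result as $\Sn^\infty$ via \autoref{thm:Sn_totalcov} and the naturality square \eqref{eq:Sn_totalcov_natural}. The two small divergences are worth noting. First, you pass from the fiber to the total space via the pullback square of \autoref{lem:classifyer_U2}, whereas the paper applies \autoref{cor:id_U2} directly to replace $(\Sn^0=\mathsf{cov}^\infty_{\Sn^0}(x))$ by $\mathsf{cov}^\infty_{\Sn^0}(x)$ under the $\Sigma$; these are the same move in different clothing, since the pullback square is itself built from that equivalence. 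Second, for the contractibility of $\Sn^\infty$ you invoke the identification of the infinite join power of $\Sn^0$ with $\sbrck{\Sn^0}$ from the join construction, whereas the paper cites Brunerie's argument that a sequential colimit of strongly constant maps (each $\Sn^n\to\Sn^{n+1}$ factors through the contractible cone point) is contractible. Both are legitimate; the paper's route is slightly more elementary in that it needs only the factorization through $\unit$ and no bookkeeping about whether the colimit inclusions $\Sn^n\to\Sn^{n+1}$ agree with the join-power transition maps, while yours connects the statement to the classifying-space picture ($\rprojective{\infty}$ as the infinite join power of $\unit\to\UU_{\Sn^0}$) that the paper only mentions in its conclusion. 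You correctly flag the two steps that carry the real content, namely the interchange of $\Sigma$ with the colimit and the contractibility of $\Sn^\infty$.
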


\begin{proof}
We have to show that the fibers of $\mathsf{cov}_{\Sn^0}^\infty$ are contractible.
Since being contractible is a mere proposition, and since the type $\UU_{\Sn^0}$
is connected, it suffices to show that the fiber
\begin{equation*}
\sm{x:\rprojective{\infty}}\Sn^{0}=\mathsf{cov}^\infty_{\Sn^{0}}(x)
\end{equation*}
of $\mathsf{cov}_{\Sn^0}^\infty$ at $\Sn^0:\UU_{\Sn^0}$ is contractible.
By \autoref{cor:id_U2} we have an equivalence of type
\begin{equation*}
\eqv{({\Sn^{0}}=\mathsf{cov}^\infty_{\Sn^{0}}(x))}{\mathsf{cov}_{\Sn^0}^\infty(x)},
\end{equation*}
for every $x:\rprojective{\infty}$. 
Therefore it is equivalent to show that the type
\begin{equation*}
\sm{x:\rprojective{\infty}}\mathsf{cov}^\infty_{\Sn^{0}}(x)
\end{equation*}
is contractible. The general version of the flattening lemma, as stated in
Lemma 6.12.2 in \cite{TheBook}, can be adapted for sequential colimits, so
we can pull the colimit out: it suffices to prove that
\begin{equation*}
\tfcolim_n\bigl(\sm{x:\rprojective{n}}\mathsf{cov}^n_{\Sn^{0}}(x)\bigr)
\end{equation*}
is contractible. 
To do this, observe that the equivalences of \autoref{thm:Sn_totalcov} form
a natural equivalence of type sequences as shown in Fig.~\ref{fig:type-sequences}.
\begin{figure*}
  \centering
\begin{tikzcd}
\sm{x:\rprojective{-1}}\mathsf{cov}_{\Sn^0}^{-1}(x) \arrow[r] \arrow[d,swap,"\eqvsym"]
& \sm{x:\rprojective{0}}\mathsf{cov}_{\Sn^0}^{0}(x) \arrow[r] \arrow[d,swap,"\eqvsym"]
& \sm{x:\rprojective{1}}\mathsf{cov}_{\Sn^0}^{1}(x) \arrow[r] \arrow[d,swap,"\eqvsym"]
& \cdots \\
\Sn^{-1} \arrow[r] 
& \Sn^0 \arrow[r]
& \Sn^1 \arrow[r]
& \cdots
\end{tikzcd}
\caption{Natural equivalence of type sequences for Thm~\ref{thm:RPoo_US0}.}
\label{fig:type-sequences}
\end{figure*}
Indeed, the naturality follows from \autoref{eq:Sn_totalcov_natural}.

Thus, the argument comes down to showing that $\Sn^\infty\defeq\tfcolim_n(\Sn^n)$
is contractible. This was first shown in homotopy type theory by Brunerie, and
the argument is basically that the sequential colimit of a type sequence of
strongly constant maps (viz., maps factoring through $\unit$) is always contractible.
\end{proof}

\begin{rmk}
Note that by our assumption that the universe is closed under pushouts, it
follows that each $\rprojective{n}$ is in $\UU$. 
Since the universe contains a natural numbers object $\N$, 
it also follows that the universe is closed under sequential colimits,
and therefore we have $\rprojective{\infty}:\UU$. 
Whereas a priori it is not clear that $\UU_{\Sn^0}$ is equivalent to a 
$\UU$-small type, this fact is contained in \autoref{thm:RPoo_US0}.
\end{rmk}

\section{Formalization}
\label{sec:formalization}

The results of \autoref{sec:UUS0,sec:fdrp} have been formalized in the Lean proof assistant \cite{Moura2015}.%
\footnote{The code is available at: \url{https://github.com/cmu-phil/Spectral/blob/master/homotopy/realprojective.hlean}}
The formalization follows the informal development very closely, except that we do not use directly the equivalence from \autoref{cor:id_U2}.
This is because it takes too much memory in this case to verify the symmetry property needed for \autoref{thm:Sn_totalcov}.
Instead, we observe that having decidable equality is a mere proposition, and hence every $A : \UU_{\Sn^0}$ has decidable equality.
Thus we can define $\alpha:\prd*{A:\UU_{\Sn^{0}}} A \to A \to \Sn^0$ by setting $\alpha(x,y)=\north$ if and only if $x=y$.
It is then easy to check that each $\alpha(x,\blank)$ is an involution and hence an equivalence, and $\alpha$ is clearly symmetric.

The results of \autoref{sec:idrp} require the flattening lemma for sequential colimits, which has not yet been formalized in Lean.

\section{Conclusion}
\label{sec:conclusion}

In the present article, we have constructed the real projective spaces,
both finite and infinite dimensional, using only homotopy theoretic methods. 
We have used the univalence axiom in one place, in proving that $(\Sn^0=A)\eqvsym A$
for any $A:\UU_{\Sn^0}$. Otherwise, our methods only involve the universal properties
of (homotopy) pushouts and sequential colimits. This suggests that it could be
possible to mimic our construction of the real projective spaces in arbitrary
$\infty$\nobreakdash-toposes, even though a description of the structure of a model of univalent type theory does not (yet) exist for an arbitrary $\infty$\nobreakdash-topos.

Since the outer square of the diagram in \autoref{eq:diagram}
is a pullback square, we have a few more remarks about our construction of the
real projective spaces. One could define the join of two maps $f:A\to X$ and
$g: B\to X$ with a common codomain $X$, as the pushout of the pullback. 
In the case of the real projective spaces, we are concerned with the map
$pt:\unit\to \UU_{\Sn^0}$ pointing to the type $\Sn^0$. 
By iteratively joining $pt$ with itself, we obtain the finite dimensional real
projective spaces as domains of the finite join-powers $pt^{\ast n}$. Indeed,
the tautological bundles are the maps $pt^{\ast n}$. 
This observation connects our construction of the real projective spaces with
the so-called `join construction', see \cite{joinconstruction}. 
This can be seen as a procedure of taking $\infty$-quotients in homotopy type theory. 

Finally, we mention that a special case of the join construction can also be
used to define the complex projective spaces in homotopy type theory, and to
perform Milnor's construction of the universal bundle of a (topological) group.
We hope that these methods can also be used to define the higher dimensional
Grassmannians in homotopy type theory.

\subsection*{Acknowledgments}

The authors gratefully acknowledge the support of the Air Force Office
of Scientific Research through MURI grant FA9550-15-1-0053. Any
opinions, findings and conclusions or recommendations expressed in
this material are those of the authors and do not necessarily reflect
the views of the AFOSR.

\phantomsection
\addcontentsline{toc}{section}{References}%
\balance
\printbibliography

\end{document}